\numberwithin{equation}{section}
\newcommand{\sgn}{{\operatorname{sgn}}}
\newcommand{\R}{\mathds{R}}
\newcommand{\dx}{\mathrm{d}}
\newcommand{\N}{\mathds{N}}
\newcommand{\Z}{\mathds{Z}}
\newcommand{\ind}{\mathds{1}}
\newcommand{\sumtn}{\sum_{t=1}^n}
\newcommand{\fn}{\hat {f_n}}
\newcommand{\ckn}{\hat c_{k,n}}
\newcommand{\dlambda}{{\dx \lambda}/(2 \pi)}
\newcommand{\dlambdaB}{\dx \lambda}
\newcommand{\intpi}{\int_0^{2 \pi}}
\newcommand{\gammah}{\widehat {\widehat{\gamma}}_n}
\newcommand{\sigmah}{\hat \sigma_n}
\newcommand{\Xs}[1]{X^*_{#1}}
\newcommand{\Es}{E^*}
\newcommand{\citeasnoun}{\citet}
\newcommand{\Xn}{\bar X_n}
\newtheorem{lemma}{Lemma}[section]
\newtheorem{theorem}{Theorem}[section]
\newtheorem{corollary}{Corollary}[section]
\newtheorem{definition}[lemma]{Definition}
\def\eps{\varepsilon}
\newcommand{\var}{\mbox{Var}}
\newcommand{\cov}{\mbox{Cov}}
\newcommand{\cum}{\mbox{cum}}
\title[Spectral Density-Driven Bootstrap]{Estimated Wold Representation and Spectral Density-Driven Bootstrap for Time Series}
\author[Krampe, Kreiss and Paparodits]{Jonas Krampe}
\address{TU Braunschweig - Inst. f. Mathematische Stochastik,
	Braunschweig,
		Germany}
\email{j.krampe@tu-bs.de}
\author[Krampe, Kreiss and Paparodits]{Jens-Peter Kreiss}
\address{TU Braunschweig - Inst. f. Mathematische Stochastik,
	Braunschweig,
		Germany}
\email{j.kreiss@tu-bs.de}
\author[Krampe, Kreiss and Paparodits]{Efstathios Paparoditis}
\address{University of Cyprus - Department of Mathematics and Statistics ,
	Nicosia,
		Cyprus}
\email{stathisp@ucy.ac.cy}
\begin{document}
\begin{abstract}
The second-order dependence structure of purely nondeterministic stationary process is described by the coefficients of the famous Wold representation. These coefficients can be obtained by factorizing the spectral density of the process. This relation
together with some spectral density estimator is used in order to obtain consistent estimators of 
these coefficients.
A spectral density-driven bootstrap for time series is then developed which uses
the entire sequence of estimated MA coefficients together with appropriately
generated pseudo innovations in order to obtain a bootstrap pseudo time series. 
It is shown that if the
underlying process is linear and if the pseudo innovations are generated by means of an
i.i.d. wild bootstrap which mimics, to the necessary extent, the moment structure of the
true innovations, this bootstrap proposal asymptotically works for a wide range of statistics.
The relations of the proposed bootstrap procedure to some other bootstrap procedures, including the
autoregressive-sieve bootstrap, are discussed. It is shown that the latter is a special case of the
spectral density-driven bootstrap, if a parametric autoregressive spectral density estimator is
used. Simulations investigate the performance of the new bootstrap procedure in finite
sample situations. Furthermore, a real-life data example is presented.
\end{abstract}

\keywords{moving average representation, spectral density estimation, spectral density factorization, linear processes, bootstrap}
\section{Introduction}
\label{se.Intro}
The spectral density (SD), if it exists, plays an important role as a 
quantity which completely describes the so-called second-order properties of stationary time series. A broad literature
exists on SD estimators, among them parametric (e.g. autoregressive) estimators,
nonparametric (e.g. lag window or smoothed periodogram) estimators or semiparametric estimators 
as a mixture of both. Time series analysts typically are rather skilled in estimating spectral densities
and they know, depending on the required application, the pros and cons of the various 
estimators. This paper intends to bring together several bootstrap procedures under the umbrella of
SD estimation. 

Recall that for a purely nondeterministic and stationary stochastic process 
$X = (X_t, t \in \Z)$ with SD $f$,
Szeg\"o's factorization expresses $f$ as a power series. The coefficients of this factorization, 
appropriately normalized, coincide with the coefficients of the well-known Wold representation of
$X$. Recursive formulas, which make use of the
Fourier coefficients of $\log(f)$ - the so-called cepstral-coefficients -,  to calculate the coefficients 
of the Wold representation of the
process have been developed; cf. \citet{pourahamdi1983}. Moreover, if $f$ is strictly positive then $X$ 
also obeys an autoregressive (AR) representation and similar recursive formulas to compute
the coefficients of this representation have also been derived; see again \citet{pourahamdi1983}. 
Using these recursions
we suggest a procedure to estimate the coefficients of both the moving average (MA) and the 
AR representations based on an estimator $\hat f_n$ of the SD $f$. 
In particular, we show that under certain
conditions on $f$ and on the used estimator $\hat f_n$, the sequence of coefficients 
of the Wold and of the AR representation of the process can be consistently 
estimated. Furthermore, under additional smoothness conditions the pointwise consistency 
of the estimators can be extended to uniform consistency for the 
entire sequence of coefficients. It should be noted that the factorization of the spectral
density has been considered in the literature also for implementing and investigating 
the so-called Wiener-Kolmogorov predictor in linear prediction (cf. \citet{Jones1964},  \citet{Bhansali1974,Bhansali1977} and \citet{pourahamdi1983}).

The availability of estimates of the MA coefficients of the Wold representation
enables the development of a general spectral density-driven bootstrap (SDDB) procedure
for time series. In particular, a pseudo time series  can be generated by using 
the estimated sequence of MA coefficients and an appropriately chosen sequence
of pseudo innovations. The resulting bootstrap procedure is then fully determined
by the particularly chosen SD estimator $\hat f_n$ and the stochastic properties of
the generated pseudo innovations. The estimated Wold representation used should mainly be regarded
as a means to an end  to generate a pseudo time series which exactly has  the chosen SD estimator as its spectrum. 

For instance, choosing a parametric AR SD estimator, the coefficients
of the estimated Wold representation coincide with the coefficients of the inverted 
estimated AR polynomial and therefore, the AR model can just as well be used
to generate the bootstrap data. In other words, using a parametric AR SD estimator
will lead to the well-known AR-sieve bootstrap for time series (cf. \cite{kreiss1992bootstrap}, 
\cite{buhlmann1997sieve} and \cite{kreiss2011}). However, a parametric
AR SD estimator often is not the first choice. Let us consider a nonparametric competitor, for instance, 
a lag window estimator of $f$ with truncation lag $M_n$. As we will see, this will lead us essentially 
to a MA process of finite order $M_n$ which can be used to generate the pseudo time series.  
Therefore, the SDDB proposed in this paper, is a general notion of 
bootstrap for time series which allows for a variety of possibilities to generate the 
pseudo time series. These possibilities are determined by the particular SD estimator
$\hat f_n$ used to obtain the
estimates of the coefficients of the Wold representation. 
Notice that although the SDDB generates bootstrap pseudo time series in the time domain, the
second-order dependence structure of the underlying process is entirely mimicked in the
frequency domain by means of the selected SD estimator used. Thus, various
well-known and flexible methods for SD estimation can be used in our bootstrap
method. As a consequence, we formulate the assumptions needed for our theoretical developments, in terms of the
SD and its estimator, only. This allows us to restrict the class of admissible SD 
estimators as little as possible. 

Fed by independent and identically distributed (i.i.d.) pseudo innovations the proposed SDDB generates pseudo time series stemming from a linear process. 
For such a choice of pseudo innovations we compare our bootstrap proposal
to some other linear bootstrap procedures, like the AR-sieve bootstrap (cf. \cite{kreiss1992bootstrap} and \cite{buhlmann1997sieve}) and the linear process bootstrap, cf. \cite{mcmurry2010banded}. 
As already indicated, it is
shown that the AR-sieve bootstrap is a special case of the SDDB which is obtained if a parametric AR SD estimator 
$\hat f_n$ is used. {Furthermore, we show that
the linear process bootstrap essentially generates pseudo observations by factorizing banded autocovariance matrices. This technique is related to the factorization of spectral densities which is used in this paper. However, in finite samples the two approaches differ from each other.}

It is worth mentioning that pseudo innovations generated in a different way than i.i.d. could also be
used in the proposed SDDB procedure. For instance,
pseudo innovations generated by means of a block bootstrap applied to appropriately
defined residuals may be used. Although such a proposal would most likely extend
the range of validity of the SDDB to
nonlinear time series, 
 we do not consider such an approach in this paper, i.e., we restrict ourselves to
the linear process set-up. We show that 
if the pseudo innovations are generated by means of an i.i.d. wild bootstrap that appropriately
mimics the first, the second, and the fourth moment structure of the true innovations,
then the proposed SDDB is asymptotically valid for a wide
range of statistics commonly used in time series analysis. Besides the sample
mean, statistics described by the so-called class of generalized autocovariances are also considered; see Section \ref{sect3} for 
details. 
We demonstrate by means of simulations that our asymptotic findings
coincide with a good finite sample behavior of the proposed bootstrap procedure.
Furthermore, the performance of the new bootstrap method is compared with that of the asymptotic normal approximations
and of some other bootstrap competitors, like the linear process, the AR-sieve
and the tapered block bootstrap. {An \emph{R}-code to generate pseudo time series with the SDDB is available at }\url{www.tu-bs.de/Medien-DB/stochastik/code-snippet_sddb.txt.}

The paper is organized as follows: Section \ref{sect2} briefly describes the Wold and the AR
representation of a stationary time series and discusses the method used
to estimate the entire sequence of coefficients in both representations. Local
and global consistency properties of the estimators are established. Section \ref{sect3} introduces
the SDDB procedure for time series and establishes, for linear
processes and for relevant classes of statistics, its asymptotic validity. A comparison
with the AR-sieve and with the linear process bootstrap is also given in this
section. Section \ref{sect4} presents some numerical simulations investigating the 
finite sample behavior of the proposed bootstrap method and compares its performance with that
of other bootstrap methods and of asymptotic normal approximations. A real-life data example
demonstrates the applicability of the suggested bootstrap procedure. Finally,
auxiliary results as well as proofs of the main results are deferred to Section \ref{sec:proofs}.

\section{Estimated Wold Representation}
\label{sect2}

\subsection{Moving Average and Autoregressive Representation}

Stationary processes are commonly  classified using the concept of linear prediction; see  for example \citeasnoun[Section 5.7]{BrockwellDavis1991} or  \citeasnoun[Section 5.5]{pourahmadi2001foundations}. 
To elaborate, let  $X=\{X_t, t \in \Z\}$ be a stationary stochastic process and define by  $\mathcal{M}_n(X)=\overline{\text{span}\{X_t, -\infty < t \leq n\}}$ and $\mathcal{M}_{-\infty}(X) =\bigcap_{n = -\infty}^\infty \mathcal{M}_n$  the closed linear subspaces of the Hilbert space $\mathcal{M}(X)=\overline{\text{span}\{X_t, t\in \Z\}}$. {Note that an overlined set denotes its closure.} Let $P_{\mathcal{M}_n(X)} X_{n+1}$ be the projection of $X_{n+1}$ onto $\mathcal{M}_n(X)$ and  define  by   $\sigma^2= E | X_{n+1} - P_{\mathcal{M}_n(X)} X_{n+1}|^2$ the mean square error of the best (in the mean square sense)  one-step,  linear predictor. The process $X$ is called  deterministic if and only if $X_t \in \mathcal{M}_{-\infty}(X)$ or equivalently if and only if $\sigma^2=0$. It is called nondeterministic if $X_{n+1} \not \in \mathcal{M}_n(X)$ and consequently $\sigma^2>0$. Furthermore, it is called  purely-nondeterministic if it is nondeterministic and $\mathcal{M}_{-\infty}(X)=\{0\}$. 

If the process $X$ possesses  a  SD $f$, which is the case  if  $ \sum_{h\in \Z}|\gamma_h| < \infty$, with  $\gamma_h=Cov(X_t,X_{t+h}) $,  then  it holds  true that $ X$  is nondeterministic if and only if 
\begin{align} 
\int_{(-\pi,\pi]} \log f (\lambda) d  \lambda > -\infty, \label{PND}
\end{align}
see  \citeasnoun[Theorem VII]{pourahmadi2001foundations}.

Wold's decomposition, see  \citeasnoun[Theorem 5.11]{pourahmadi2001foundations},  guarantees   that any nondeterministic process can be divided into a deterministic and a  purely-nondeterministic part. Furthermore,  the  purely-nondeterministic part of the process has a unique one-sided moving average (MA) representation given by 
\begin{align} \label{eq.WoldRepr}
X_t=\sum_{k=0}^\infty c_k \eps_{t-k}, \ \ t \in \Z,
\end{align}
where $ \sum_{k}|c_k|^2 <\infty$ and  $\{\eps_t, t \in \Z\}$ is a white noise process   defined   by  $\eps_{n+1}=X_{n+1} - P_{\mathcal{M}_n(X)} X_{n+1}$, $ n \in \Z$, called the innovation process. Here, white noise refers to an uncorrelated time series.
Notice that even  if $X$ is a linear process driven by i.i.d. innovations, the  white noise process appearing in the corresponding  one-sided MA representation (\ref{eq.WoldRepr}) might not be  i.i.d.. To give an example consider the  linear, first order MA process,  $ X_t=e_t + \theta e_{t-1}$ where $ \{e_t, t\in \Z\}$ is an i.i.d. process and $ \theta >1$.  The Wold representation of this process is given by $X_t=\eps_t + \theta^{-1} \eps_{t-1}$ where $\eps_t=e_t +(1-\theta^2) \sum_{j=1}^\infty (-\theta)^{-j} e_{t-j}$  is white noise with variance $\theta^2$. Obviously, the innovations  $\eps_t$ are  not independent.

Another interesting one-sided representation of the process $X$ is the so-called AR representation which appears if 
 the SD $f$ is bounded away from zero, i.e., if $\inf_{\lambda\in [0,\pi]}f(\lambda) = C > 0$. In this case and instead 
 of using the full history of the innovation process $\{\eps_t, t \in \Z\}$,  
  the full 
 history of the process $X$ itself is   used to express  the  value  $ X_t$ at any time point $t$.  $X_t$ can then be written as   
\begin{align} \label{eq.ArRepr}
X_t=\sum_{k=1}^\infty b_k X_{t-k} + \eps_t,\quad  t \in \Z,
\end{align}
where $ \sum_{k}|b_k|^2 <\infty$ and $ \{\varepsilon_t \}$ is the same white noise innovation process as in (\ref{eq.WoldRepr}); see \citeasnoun[Section 6.2.1]{pourahmadi2001foundations}.  Expression (\ref{eq.ArRepr}) is called  the AR representation of   the process $X$ 
and should not be confused with that of a linear, infinite order AR process driven by i.i.d. innovations.  To demonstrate this,  consider again the  previous example of  the linear, noninvertible MA  process $ X_t =e_t + \theta e_{t-1}$ with $ \theta >1$. This process  has the AR representation $X_t=-\sum_{j=1}^\infty (-\theta)^{-j} X_{t-j} + \eps_t$ where $\{\eps_t, t\in \Z\}$ is the uncorrelated but not independent white noise processes appearing in the Wold representation of $X_t$. 

To derive recursive formulas for the coefficients in the MA representation $(\ref{eq.WoldRepr})$ and the AR representation $(\ref{eq.ArRepr})$, 
we start with some basic factorization  properties of the SD $f$. 
 Notice first that $f$ can be expressed as $f(\cdot)=(2\pi)^{-1}| V(\exp(-i\cdot))|^2$  for   a power series $V(z)=\sum_{k=0}^\infty v_k z^k$ and that such a factorization  exists  if and only if   condition (\ref{PND}) is fulfilled; see \citeasnoun{Szego1921}. The above factorization of the SD  is not unique. However,  if we restrict ourselves to  power series which have no roots inside the unit disk and appropriately normalize the coefficients, a unique representation occurs. The coefficients of this unique power series  coincide with  the coefficients $c_k$  of the Wold representation   (\ref{eq.WoldRepr}), if 
 $ \tilde V=V/v_0$ is used. We denote this unique and normalized power series  by  $C(z)=\sum_{k=0}^\infty c_k z^k$. 
Notice that  $(\ref{PND})$ ensures, that the Fourier coefficients of $\log f$ are well defined. Furthermore, since $C(z)$ has no zeros inside the open unit disc, $\log(C(z))$ is analytic inside the same  region  and we have for $|z| < 1$ that 
\begin{align}
 \sigma (2 \pi)^{-1/2}  \sum_{k=0}^\infty c_k z^k  =\exp \left( a_0/2 +\sum_{k=1}^\infty a_k z^k  \label{phase}\right),
\end{align}
where $a_k$ is the $k$-th Fourier coefficients of $\log f$, 
\begin{align}
a_k= \int_{-\pi}^\pi \log f (\lambda) \exp(- ik \lambda) d \lambda /(2\pi) \label{def_a_k}.
\end{align} 
Differentiation of equation $(\ref{phase})$ together with comparison of coefficients leads to a recursive formula to calculate the coefficients $\{c_k  \}$  of  this power series by using the Fourier coefficients of $\log f$, see \citeasnoun{pourahamdi1983,pourahamdi1984}. In particular, setting $c_0=1$,  the following recursive formula can be used to obtain the coefficients $\{c_k\}$, 
\begin{align}  \label{eq.ck}
c_{k+1}= \sum_{j=0}^k \left(1 -\frac{j}{k+1}\right)  a_{k+1-j} c_j, \quad  k=0,1,2,\dots. 
\end{align}
Furthermore,  $\sigma^2=2 \pi \exp(a_0)$.
If the process $X$ possesses also the AR representation (\ref{eq.ArRepr}), then the  coefficients $(b_k)_{k \in \N}$ of this representation can  be calculated  using the relation 
 $C(z)^{-1}=B(z)=\sum_{k=0}^\infty (-b_k) z^k$. Setting  $b_{0}=-1$ the corresponding recursive formula to obtain the $b_k$'s         is given by  
\begin{align} \label{eq.bk}
b_{k+1}= -\sum_{j=0}^k \left(1 -\frac{j}{k+1}\right)  a_{k+1-j} b_j, \quad  k=0,1,2,\dots.
\end{align}

{A proof of (\ref{phase}) can be found in the Supplementary Material, Lemma A.2
. As we see from the proof of (\ref{phase}), this approach cannot be transferred directly to the multivariate case. Matrix multiplication is not commutative and therefore the exponential laws do not apply for matrices. However, these properties are essential for the proof of (\ref{phase}). Moreover, there are examples where $(\ref{phase})$ is not valid in the multivariate case. Consequently, the recursive formulae $(\ref{eq.ck})$ and $(\ref{eq.bk})$ cannot be directly applied to multivariate time series. 
 
\subsection{Estimating the  Coefficients of the  Wold  Representation} \label{sec.2.2}
Our next goal is to estimate the coefficients $\{ c_k, k \in \N\}$ of the Wold representation  (\ref{eq.WoldRepr}). 
The basic idea is to use an estimator $ \fn $ of the SD $f$ to 
get estimates of the Fourier coefficients of $ \log(f)$ and to plug in these estimates in the recursive formula (\ref{eq.ck}). Notice that estimates of the coefficients $\{b_k, k \in \N \}$ of the AR representation (\ref{eq.ArRepr}) can be obtained by using formula (\ref{eq.bk}) and the estimates of the $a_k$'s.
 
Let $\hat a_{k,n}=(2 \pi)^{-1}\int_{-\pi}^\pi \log ( \fn (\lambda)) \exp(-ik\lambda) d \lambda$ be the estimator of the $k$-th Fourier coefficient of $\log(f)$ and denote by $ \{\hat{c}_{k,n}$, $ k\in \N\}$,  the estimators of the coefficients of the Wold representation obtained using formula (\ref{eq.ck}), e.g. $\hat c_{0,n}=1,  \hat c_{k+1,n}= \linebreak\sum_{j=0}^k \left(1 -{j}/({k+1})\right) \hat a_{k+1-j,n} \hat c_{j,n}, \quad  k=0,1,2,\dots.$ Let $\{\hat{b}_{k,n}\}$ be the corresponding estimators of the coefficients of $\{b_k\}$ using formula (\ref{eq.bk}). 
Using the above recursive formula, it is theoretically possible to compute the infinite series of coefficients corresponding to $\fn$. However, in practice the computation of Fourier coefficients is usually approximated by a sum of finite frequencies. This limits the number of MA coefficients to compute and gives an approximation error. This error depends on the smoothness of $\log f$ and is usually negligible, see Supplementary Material A.1 for details. To give an example, consider Model II used in the simulation study; see Section 4. This model possesses the slowest decaying autocovariance of all three models considered. Nevertheless, using $1024$ instead of $8192$ Fourier frequencies to compute Wold's coefficients gives an overall squared error of less than $10^{-5}$. 

It is clear that the properties of the estimators $\hat{c}_{k,n}$ and $ \hat{b}_{k,n}$ depend heavily on the properties of the estimator $\fn$. 
To obtain consistency, the following condition suffices which essentially requires that  $\fn$ 
is a uniformly  consistent estimator of $f$. For lag window estimators such a uniform consistency has been established by \citet[Lemma A.2]{Jentsch2015}, and for AR SD estimators by \citet[Theorem 3.2]{Buhlmann1995331}.

\vspace*{0.2cm}
{\bf Assumption 1}\ The estimator $\fn$ satisfies  $ \int_{(-\pi,\pi]} \log(\fn (\lambda)) d  \lambda > -\infty$. Furthermore, 
\begin{align}
 \sup_{\lambda \in [0,\pi]}|\fn(\lambda) -f(\lambda)| \stackrel{P}{\rightarrow} 0, \text{ as } n \rightarrow \infty. \label{f_const}
 \end{align}
 
\vspace*{0.2cm}

Then, the following  result can be established. 
\begin{theorem} \label{consistence}
Suppose that  $f$  satisfies  (\ref{PND}) and that Assumptions 1 holds true. Then,  as $ n \rightarrow \infty$, 
a) $\sup_{k \in \N} |\hat a_{k,n}-a_k |  \overset{P}{\to}  0$, and for every fixed $k\in \N$,
b)  $\hat c_{k,n} \overset{P}{\to} c_k$, and c) \  $\hat b_{k,n} \overset{P}{\to} b_k$.
\end{theorem}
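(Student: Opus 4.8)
My plan is to treat part (a) as the crux and to obtain (b) and (c) from it by a routine induction. For (a), the starting point is that $a_k$ and $\hat a_{k,n}$ are the Fourier coefficients of $\log f$ and $\log\fn$, so that for every $k$
\[
|\hat a_{k,n}-a_k|\le\frac{1}{2\pi}\int_{-\pi}^{\pi}\bigl|\log\fn(\lambda)-\log f(\lambda)\bigr|\,d\lambda=:D_n,
\]
a bound that is uniform in $k$. Thus (a) reduces to the single statement $D_n\overset{P}{\to}0$, i.e. $L^1$-convergence of $\log\fn$ to $\log f$. First I would record the ingredients supplied by the hypotheses: $\sum_h|\gamma_h|<\infty$ makes $f$ continuous and bounded, (\ref{PND}) gives $\log f\in L^1(-\pi,\pi]$, and continuity of $f\ge0$ together with (\ref{PND}) forces $|\{f=0\}|=0$; moreover $\delta_n:=\sup_\lambda|\fn(\lambda)-f(\lambda)|\overset{P}{\to}0$ by (\ref{f_const}).

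Writing $D_n=\tfrac{1}{2\pi}\int(\log\fn-\log f)^+\,d\lambda+\tfrac{1}{2\pi}\int(\log f-\log\fn)^+\,d\lambda$, I would handle the two one-sided pieces separately. The positive piece is benign: from $\fn\le f+\delta_n$ one gets pointwise $(\log\fn-\log f)^+\le\log(1+\delta_n/f)$, and the majorant $\log(1+\delta_*/f)=\log(f+\delta_*)-\log f$ is integrable for any fixed $\delta_*>0$, the only unbounded contribution being $-\log f$, which lies in $L^1$ by (\ref{PND}). Since $\delta\mapsto\int\log(1+\delta/f)\,d\lambda$ is increasing and tends to $0$ as $\delta\downarrow0$ by dominated convergence, and $\delta_n\overset{P}{\to}0$, the positive piece tends to $0$ in probability.

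The hard part will be the negative piece $\int(\log f-\log\fn)^+\,d\lambda$, which measures how far $\fn$ can fall \emph{below} $f$ near the zeros of $f$. Uniform consistency controls $\fn$ from above by $f+\delta_n$ but says nothing from below, so (\ref{f_const}) alone does not bound this term, and the condition $\int\log\fn>-\infty$ only guarantees finiteness for each $n$. The clean way to close the argument is the identity $\int(\log\fn-\log f)=\int(\log\fn-\log f)^+-\int(\log f-\log\fn)^+$: the left-hand side equals $2\pi(\hat a_{0,n}-a_0)$ and the first term on the right already tends to $0$, so the negative piece tends to $0$ as soon as $\hat a_{0,n}=\tfrac{1}{2\pi}\int\log\fn\to a_0=\tfrac{1}{2\pi}\int\log f$. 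In other words, the entire uniform statement (a) collapses to the scalar convergence of the zeroth coefficient. This is exactly the delicate point: $(\log\fn)^+$ is bounded, so $\int(\log\fn)^+\to\int(\log f)^+$ by dominated convergence, while Fatou applied to $(\log\fn)^-$ yields only $\limsup_n\int\log\fn\le\int\log f$; the matching lower bound requires uniform integrability of $(\log\fn)^-$, i.e. ruling out $\fn$ being pathologically small on small neighbourhoods of the zeros of $f$. This holds trivially when $f$ is bounded away from $0$ and, for the lag-window and autoregressive estimators cited after Assumption 1, can be read off from their explicit form; this is where I expect the real work to lie.

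Finally, parts (b) and (c) follow by induction on $k$ from the recursions (\ref{eq.ck}) and (\ref{eq.bk}). The base cases hold exactly, $\hat c_{0,n}=1=c_0$ and $\hat b_{0,n}=-1=b_0$. For the inductive step, $\hat c_{k+1,n}=\sum_{j=0}^{k}\bigl(1-\tfrac{j}{k+1}\bigr)\hat a_{k+1-j,n}\hat c_{j,n}$ is a fixed polynomial, hence continuous, function of the finitely many quantities $\hat a_{1,n},\dots,\hat a_{k+1,n}$ and $\hat c_{0,n},\dots,\hat c_{k,n}$; these converge in probability to $a_1,\dots,a_{k+1}$ (by (a)) and to $c_0,\dots,c_k$ (by the induction hypothesis), so the continuous mapping theorem gives $\hat c_{k+1,n}\overset{P}{\to}c_{k+1}$. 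The identical argument applied to (\ref{eq.bk}) yields $\hat b_{k,n}\overset{P}{\to}b_k$.
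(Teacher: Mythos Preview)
Your outline coincides with the paper's proof: bound $\sup_{k}|\hat a_{k,n}-a_k|$ by $D_n=(2\pi)^{-1}\int|\log\fn-\log f|\,d\lambda$, split into the two one–sided pieces, control the piece where $\fn>f$ via $\fn\le f+\delta_n$ and dominated convergence with the $n$–free majorant $|\log(C+f)|+|\log f|$, and then deduce (b) and (c) from (a) by the continuous mapping theorem applied to the finite recursions (\ref{eq.ck}) and (\ref{eq.bk}). So structurally you are doing exactly what the paper does.

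The one substantive difference is that you flag the piece $\int(\log f-\log\fn)^+\,d\lambda$ as the hard step, whereas the paper disposes of it with the word ``analogously''. Your caution is justified: the symmetric argument would require a majorant of the form $|\log(C+\fn)|+|\log\fn|$, which depends on $n$, so ordinary dominated convergence does not apply. In fact, under Assumption~1 and (\ref{PND}) alone the step can fail: if $f$ has a zero and one lets $\fn$ agree with $f$ except on a neighbourhood of width $1/n$ of that zero where $\fn$ is set to $e^{-n}$, then $\sup_\lambda|\fn-f|\to 0$ and $\int\log\fn>-\infty$ for each $n$, yet $\hat a_{0,n}-a_0\to -1/\pi\ne 0$. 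Your reduction of the problem to the scalar convergence $\hat a_{0,n}\to a_0$, equivalently to uniform integrability of $(\log\fn)^-$, is a clean way to isolate precisely the missing ingredient. Once $f$ is bounded away from zero (as assumed from Theorem~\ref{maseries} onward and throughout the bootstrap applications) both your argument and the paper's ``analogous'' step go through trivially, so the gap matters only at the generality claimed in Theorem~\ref{consistence} itself.
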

By the above theorem, for an $M$-dependent process, we have $\sum_{k=0}^M |c_k -\hat c_{k,n}|=o_P(1)$. Imposing more conditions  on   $f$ and its  estimator $\fn$, the consistency  properties  of the estimators $ \hat{a}_{k,n}$ and $ \hat{c}_{k,n}$  can be  refined  and  inequalities, similar to the well-known Baxter inequality for the AR-coefficients, \cite{Baxter1962},  can be established. Such  inequalities   are  useful since they control  the  overall  estimation error  that occurs when  the  estimated SD $\fn$     instead of the true SD  $f$ is used in order to obtain the estimates of  interest.

\vspace*{0.2cm}
{\bf Assumption 2} \  The estimator $\fn$ fulfills the following conditions.
\begin{enumerate}
\item[(i)] \ There exists constants $ 0 < C_1 < C_2 < \infty$ such that  $ C_1 \leq  \fn(\lambda) \leq C_2$ for all  $ \lambda \in [0,\pi]$ and     all $ n \in \N$. \label{bounded}
\item[(ii)]  \ The first derivative of $ \fn$ with respect to $\lambda$ exists,  is continuous and integrable. Furthermore, 
\begin{align} 
\sup_{\lambda \in [-\pi,\pi]} \left| \frac{\dx}{\dx \lambda}\fn(\lambda) - \frac{\dx}{\dx \lambda} f(\lambda) \right|\stackrel{P}{\rightarrow} 0,\text{ as } n \rightarrow \infty. \label{df_const}
\end{align}
\end{enumerate}
Condition (ii) can be verified for lag window estimators by using similar arguments as in  the proof of Lemma A.2 in \citet{Jentsch2015} under the same cumulant conditions and a slightly faster decay of the autocovariance function. 
For the AR SD estimators the same condition can be verified by using arguments similar to those used in the proof of Theorem 3.2 in \citet{Buhlmann1995331}.
Notice that boundedness of the SD is ensured by an absolute summable autocovariance function, which is a common assumption for bootstrap procedures for time series. Furthermore, the assumption regarding the existence of derivatives of the SD can be transferred to assumptions on the summability of the autocovariance function. However, since the bootstrap approach proposed in this paper is SD-driven, 
we prefer to formulate the conditions needed as assumptions for the SD of the underlying process. The following theorem summarizes the properties of the estimators  $ \{\hat{a}_{k,n},n\in \N\}$ and  $ \{\hat{c}_{k,n},n\in \N\}$.


\begin{theorem}\label{maseries}
Let the spectral density $f$  be strictly positive and bounded with  continuous and integrable first derivative.  Then,  as $ n \rightarrow \infty$, 
\begin{enumerate}
\item[(a)] \ If $\fn $ satisfies Assumption 1 and Assumption 2(i) then 
\begin{equation}
\sum_{k=-\infty}^\infty |\hat a_{k,n}- a_k |^2 = \int_0^{2 \pi} | \log f (\lambda) - \log \fn(\lambda)|^2  \dlambda  \stackrel{P}{\rightarrow} 0 \label{L2ak}
\end{equation}
and
\begin{equation}
\sum_{k=0}^\infty  |\hat c_{k,n}-c_k |^2 \stackrel{P}{\rightarrow} 0. \label{L2ck} 
\end{equation}
\item[(b)] \ If $\fn $ satisfies Assumption 1 and Assumption 2, then 
\begin{equation}
\sum_{k=-\infty}^\infty k^2 | \hat a_{k,n}-a_k|^2 \stackrel{P}{\rightarrow} 0 \label{L2akk} \text{ and } \sum_{k=1}^\infty |\hat a_{k,n}-a_k| \stackrel{P}{\rightarrow} 0.
\end{equation}
Furthermore, 
\begin{equation}
\sum_{k=0}^\infty  k^2|\hat c_{k,n}-  c_k |^2 \stackrel{P}{\rightarrow} 0   \text{ and } \sum_{k=0}^\infty  |  \hat c_{k,n}- c_k |   \stackrel{P}{\rightarrow} 0.
 \label{L1ck}
\end{equation}
\end{enumerate}
\end{theorem}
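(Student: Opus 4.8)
The plan rests on two observations. First, Parseval's identity converts all $\ell^2$-type sums into $L^2$-integrals over $[-\pi,\pi]$, since $\hat a_{k,n}-a_k$ is the $k$-th Fourier coefficient of $\log\fn-\log f$ and, more generally, $ik(\hat a_{k,n}-a_k)$ is the $k$-th Fourier coefficient of its derivative. Second, combining \fer{phase} with $\sigma^2=2\pi\exp(a_0)$ yields the clean exponential form $C(z)=\exp(A(z))$ with $A(z)=\sum_{k\ge1}a_kz^k$; the same relation $\hat C_n(z)=\exp(\hat A_n(z))$, $\hat A_n(z)=\sum_{k\ge1}\hat a_{k,n}z^k$, holds for the estimators because the recursion \fer{eq.ck} defining $\hat c_{k,n}$ from $\hat a_{k,n}$ is exactly the coefficient identity encoded in $\hat C_n'=\hat A_n'\hat C_n$. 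Under Assumption 2(i) both $C$ and $\hat C_n$ lie in the Hardy space $H^2$ with $|C(e^{i\lambda})|^2=2\pi f(\lambda)/\sigma^2$ and $|\hat C_n(e^{i\lambda})|^2=\fn(\lambda)\exp(-\hat a_{0,n})$, so (using $\fn\le C_2$ and $\exp(\hat a_{0,n})\ge C_1$) there is a constant $M<\infty$ with $\sup_\lambda|C(e^{i\lambda})|\le M$ and $\sup_\lambda|\hat C_n(e^{i\lambda})|\le M$ for all $n$.

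For part (a), the displayed equality in \fer{L2ak} is Parseval applied to $\log\fn-\log f$. For the convergence, put $C_\ast=\min\{C_1,\inf_\lambda f(\lambda)\}>0$ (positive because $f$ is strictly positive and continuous on a compact interval and $\fn\ge C_1$); since $t\mapsto\log t$ is Lipschitz with constant $1/C_\ast$ on $[C_\ast,\infty)$, Assumption 1 gives $\sup_\lambda|\log\fn-\log f|\le C_\ast^{-1}\sup_\lambda|\fn-f|\stackrel{P}{\to}0$, and bounding the integral in \fer{L2ak} by its supremum finishes it. For \fer{L2ck} I use the pointwise elementary inequality $|e^u-e^v|\le|u-v|\max\{|e^u|,|e^v|\}$ with $u=\hat A_n(e^{i\lambda})$, $v=A(e^{i\lambda})$ to obtain $|\hat C_n(e^{i\lambda})-C(e^{i\lambda})|\le M\,|\hat A_n(e^{i\lambda})-A(e^{i\lambda})|$, and then Parseval twice:
\[
\sum_{k=0}^\infty|\hat c_{k,n}-c_k|^2=\int_{-\pi}^\pi|\hat C_n-C|^2\dlambda\le M^2\int_{-\pi}^\pi|\hat A_n-A|^2\dlambda=M^2\sum_{k=1}^\infty|\hat a_{k,n}-a_k|^2,
\]
which tends to $0$ by \fer{L2ak}.

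For part (b), Parseval gives $\sum_k k^2|\hat a_{k,n}-a_k|^2=\int_{-\pi}^\pi|g'|^2\dlambda$ with $g'=(\log\fn)'-(\log f)'=\fn'/\fn-f'/f$. Writing $g'=(\fn'-f')/\fn+f'(f-\fn)/(\fn f)$ and using $\fn\ge C_1$, $f\ge C_\ast$, $\sup_\lambda|f'|<\infty$ together with Assumptions 1 and 2(ii), one gets $\sup_\lambda|g'|\stackrel{P}{\to}0$, which is the first half of \fer{L2akk}; the $\ell^1$-statement follows from Cauchy--Schwarz, $\sum_{k\ge1}|\hat a_{k,n}-a_k|\le(\sum_{k\ge1}k^{-2})^{1/2}(\sum_{k\ge1}k^2|\hat a_{k,n}-a_k|^2)^{1/2}$. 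For \fer{L1ck} the same device gives $\sum_{k\ge0}k^2|\hat c_{k,n}-c_k|^2=\int_{-\pi}^\pi|D'|^2\dlambda$ with $D=\hat C_n-C$ and, from $\hat C_n'=\hat A_n'\hat C_n$, $C'=A'C$, the decomposition $D'=(\hat A_n'-A')\hat C_n+A'(\hat C_n-C)$. By Minkowski's inequality the $L^2$-norm of $D'$ is at most $\sup_\lambda|\hat C_n|\,(\sum_{k\ge1}k^2|\hat a_{k,n}-a_k|^2)^{1/2}+(\sum_{k\ge1}k^2|a_k|^2)^{1/2}\sup_\lambda|\hat C_n-C|$; the first summand vanishes by \fer{L2akk}, and in the second $\sum_{k\ge1}k^2|a_k|^2=\int_{-\pi}^\pi|(\log f)'|^2\dlambda<\infty$ because $f'/f$ is bounded, while $\sup_\lambda|\hat C_n-C|\le M\sup_\lambda|\hat A_n-A|\le M\sum_{k\ge1}|\hat a_{k,n}-a_k|\stackrel{P}{\to}0$ again by \fer{L2akk}. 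Finally, the $\ell^1$-bound on $\hat c_{k,n}-c_k$ follows by Cauchy--Schwarz as before, noting $\hat c_{0,n}=c_0=1$.

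The step I expect to be the real obstacle is the last one: controlling the cross term $A'(\hat C_n-C)$ in $D'$ requires the \emph{uniform} (sup-norm) convergence $\sup_\lambda|\hat C_n-C|\stackrel{P}{\to}0$, which is strictly stronger than the $L^2$-convergence already in hand and which in turn forces the $\ell^1$-convergence $\sum_{k\ge1}|\hat a_{k,n}-a_k|\stackrel{P}{\to}0$ to be established first; moreover the finiteness of $\sum_{k\ge1}k^2|a_k|^2$, i.e. $f'/f\in L^2$, is precisely where the smoothness hypothesis on $f$ enters. This dictates the order of the argument: \fer{L2ak} first, then \fer{L2ck}, then \fer{L2akk} (including its $\ell^1$ part), and only then \fer{L1ck}.
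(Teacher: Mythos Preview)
Your proof is correct and follows the same architecture as the paper's: Parseval to turn the $\ell^2$ sums into $L^2$ integrals, the exponential identity \fer{phase} (in the normalized form $C=\exp A$, $\hat C_n=\exp\hat A_n$) to pass from the $a_k$'s to the $c_k$'s, Cauchy--Schwarz with the weight $k^{-2}$ to get the $\ell^1$ statements, and differentiation of the exponential identity for the weighted $\ell^2$ bound on $c_k-\hat c_{k,n}$.

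The difference is purely in the execution of the key estimate. For \fer{L2ck} the paper expands $|\sigma C-\sigmah\hat C_n|^2$ explicitly, uses $|C|^2\propto f$, $|\hat C_n|^2\propto\fn$, and controls the cross term via $\cos x\ge 1-x^2/2$, arriving at $\int(f^{1/2}-\fn^{1/2})^2+\sup(f\fn)^{1/2}\int(\log f-\log\fn)^2$; you short--circuit this with the complex mean--value bound $|e^u-e^v|\le|u-v|\max\{|e^u|,|e^v|\}$, which gives the same conclusion in one line. For \fer{L1ck} both arguments differentiate $C=\exp A$ and split via the product rule; your decomposition $D'=(\hat A_n'-A')\hat C_n+A'(\hat C_n-C)$ is exactly the one the paper uses, only written more symmetrically, and your normalization $C=\exp A$ (absorbing $\sigma$) spares you the paper's extra step of removing $\sigma,\sigmah$ at the end. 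The trade--off is that the paper's route never needs the sup--norm bound $\sup_\lambda|\hat C_n-C|\to 0$ for \fer{L2ck}, whereas you do need it (via $\sum_k|\hat a_{k,n}-a_k|\to 0$) for the second summand in \fer{L1ck}; both are available under the stated assumptions, so this is a matter of taste rather than strength.
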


Relation (\ref{phase}) plays a key   role in  the proofs of  assertions (\ref{L2ck}) and (\ref{L1ck}).  Notice that since $C(z)^{-1}=B(z)$,   similar relations  for $\{b_k,k\in \N\}$ can be derived. Furthermore, the results of Theorem~\ref{maseries} can be straightforwardly  extended to the sequence of estimation errors $\{(\hat b_{k,n}-b_k)$, $k \in N\}$.

There are some alternative approaches   to   estimate  the coefficients $c_k$ and $b_k$   which have  been proposed in the literature.  In particular and for estimating the coefficients $c_k$, one option is   the innovation-algorithm which works by fitting MA(q) models where  the order $q$  increases to infinity as the sample size $n$ increases to infinity;  see  \cite[Proposition 5.2.2]{BrockwellDavis1991}. For 
estimating   the coefficients $b_k$,  commonly  an AR(p) model is fitted to the time series at hand by means of Yule-Walker estimators, where,   the  order $p$ 
is also allowed to increase to infinity with sample size;   see  \cite[Section 8.1]{BrockwellDavis1991}. Under certain conditions, both approaches  
are  consistent; see \cite[Theorem 7.14]{pourahmadi2001foundations}. However, the basic idea behind  these approaches differs from ours and so do the estimators obtained via SD factorization.  In the above mentioned approaches, the  estimated  autocovariance matrix is used to fit a finite MA  or a finite AR model. Consistency  of the corresponding estimators is  then obtained  by allowing the  order of the fitted model to increase to infinity at an appropriate rate as the sample size $n$ increases to infinity. These approaches face, therefore, two sources of errors. The first is  the  estimation error which is caused by   the fact that  estimated autocovariances are used instead of the true ones. The second is the  approximation error which is due to the fact that a finite order  model  is used to approximate  the underlying  infinite order structure. Although the  estimation error  cannot be avoided,  
   the approximation error caused by our estimation procedure is different. This error depends  on the quality of the   SD estimator $\fn$ used to approximate the true SD $f$, where $\fn$ is selected from a wide   range of possible estimators and not only  from   those obtained by  using  finite order AR or MA parametric models. The innovation-algorithm is similar to the factorization of autocovariance matrices which is used in the linear process bootstrap. In the Supplementary Material, see Section A.2, 
a simple example is discussed to point out the differences between factorizing autocovariance matrices and spectral densities.
 \subsection{Spectral Density Estimators}
Since our estimation procedure relies on a SD  estimator $\fn$, we briefly discuss the variety of such estimators
 and their impact  on the  estimators     $\{\hat{c}_{k,n}\}$ or $\{\hat b_{k,n}\}$ obtained.

As already mentioned, spectral densities can be estimated  using a parametric approach, that is, by fitting a parametric model  to the time series at hand and  using 
 the SD of the fitted model as an estimator of  the SD of the process. Since AR models 
 are easy to fit,  they are commonly  used for such a purpose; see \citeasnoun{akaike1969fitting}, \citeasnoun{Shibata1981}. 
In this context, parameter estimators,  like Yule-Walker estimators, are  popular because they 
 ensure  invertibility of the  corresponding estimated  AR-polynomial; see \citeasnoun[Section 8.1]{BrockwellDavis1991}. Now, if an AR SD estimator is used in the  spectral factorization procedure, then the estimated coefficients $\{\hat{c}_{k,n}\}$ obtained are identical to those appearing in the power series expansion of the inverted estimated AR polynomial. Furthermore, the corresponding  sequence of estimated coefficients  $\{\hat{b}_{k,n}\}$ is  finite and the $\hat{b}_{k,n}$'s, $k \in \{1,2\dots,p\},$ coincide with the estimated AR parameters. 

 Using nonparametric methods  like lag window or kernel smoothed periodogram  estimators is another popular approach to estimate the SD; cf. \citeasnoun[Section 10.4]{BrockwellDavis1991}.  Lag window estimators truncate the estimated autocovariances at a given lag controlled by  a truncation parameter. Such estimators of the  SD can be interpreted as obtained by (implicitly)  fitting a  
 finite order MA model to the time series at hand; see  also \citeasnoun[Prop. 3.2.1]{BrockwellDavis1991}. The sequence of estimated coefficients  $\{\hat{c}_{k,n}\}$ of  the Wold representation obtained  by using such a SD
  estimator  is  finite with  $\hat{c}_{k,n}=0$  for values of $k$ larger than the truncation parameter.  Due to the asymptotic equivalence between lag window and smoothed periodogram estimators, similar remarks can be made also 
  for SD estimators obtained by smoothing the periodogram. Furthermore, as mentioned in Section 2.2, lag window estimators as well as AR estimators satisfy Assumptions 1 and 2. 

 A different  nonparametric approach to estimate the SD  is to truncate  the 
  Fourier series of $ \log(f)$    which presumes an  exponential model for the SD; see \citeasnoun{Bloomfield1973}. Such a model is given by 
$
f(\lambda)=(2\pi)^{-1}\sigma^2 \exp\{2 \sum_{j=1}^r \theta_j \cos(\lambda j)\}.
$
Unlike truncating the autocovariance function, non-negative definiteness of the SD $f$ is ensured for all possible values of the parameters $\theta_j, j=1,\dots,r$.
As  \citeasnoun{Bloomfield1973} pointed out,  the autocovariance function of such an exponential model cannot, in general,  be described by a finite AR or a finite MA model. Thus,  using such an estimator of the SD in the  factorization algorithm, 
leads to an infinite sequence  of estimators $(\ckn)$ or  $(\hat b_{k,n})$ respectively. Notice that the Fourier coefficients of  $\log(f)$ are also known as the cepstral coefficients or vocariances and they have been widely used in the signal processing literature to estimate the SD; see \citeasnoun{Stoica2006}. 

An interesting combination of  nonparametric and parametric approaches for SD estimation is offered 
  by the so-called pre-whitening approach; see \citeasnoun{blackman1958measurement}. The idea is to use a parametric model to filter the time series and then apply a  nonparametric estimator to the  time series of  residuals.  Using an AR-model for pre-whitening (filtering)  and  a lag window estimator for estimating the SD 
  of  the residuals, can be interpreted as (implicitly) fitting an ARMA-model to the time series at hand. The idea is that the parametric AR-model fit is able to represent the peaks of the SD  quite well while  the lag window estimator applied  to the 
  residuals can capture features  of the SD that are not covered  by the parametric fit. 
Notice that for the pre-whitening approach consistency of the lag window estimator is obtained even in the case, where the parametric fit does not improve the estimation. 
Using such a SD estimator for the factorization algorithm the coefficients $\{ \hat{c}_{k,n}\}$ and $\{\hat{b}_{k,n}\}$ obtained will be those of the infinite order MA representation and infinite order AR representation of the (implicitly) fitted ARMA model, respectively. However, to reduce numerical errors, the use of the ARMA representation is recommend, the MA coefficients are obtained by the factorization of the pre-whitened SD and the AR coefficients are those of the fitted AR-model.
\vspace{-0.3cm}
\section{Spectral Density-Driven Bootrstrap}
\label{sect3}

\subsection{The Spectral Density-Driven Bootrstrap Procedure}\label{sec.boot}
In the previous section we have dealt with the coefficients $\{c_k , k \in \N\}$ of the MA and $\{b_k , k \in \N\}$ of the AR representation of the process. For the coefficients in both representations, consistent estimators have been developed. Consequently, both representations can be used in principle to develop a bootstrap procedure to generate pseudo time series $ X_1^\ast, X_2^\ast, \ldots, X_n^\ast$. We focus in this work on the MA representation, since it exists for every SD. Clearly, such a bootstrap procedure will be determined by the SD estimator $\fn$ used to obtain the coefficients $ \{\hat{c}_{k,n}\}$ and by the generated series of pseudo innovations $\{\epsilon_t^\ast \}$ (cf. Step \ref{innovationgene} below). Thus, the tuning parameters of this bootstrap procedure coincide with those used for the SD estimation. Consequently, 
one can follow data-driven methods proposed in the literature to choose these parameters. 
Now, given an estimator  $\fn$  of the SD $f$,  the  SDDB algorithm consists of the following steps.

\begin{enumerate} [Step 1.]
\item Compute the Fourier coefficients of $\log (\fn)$ given by \linebreak$\hat a_{k,n}={1}/{(2\pi)} \int_0^{2\pi} \log( \fn(\lambda)) \exp (-i k \lambda) d \lambda$ for $k =1,2,\dots$.
\item Let  $\sigmah^2=2\pi \exp(\hat a_{0,n})$ and compute the 
coefficients $\hat c_{k,n}, k=1,2,\dots$ using the formula $\hat c_{k+1,n}= \sum_{j=0}^k \left(1 -{j}/({k+1})\right)  \hat a_{k+1-j,n} \hat c_{j,n},   k=0,1,2,\dots,$ and the starting value $\hat c_{0,n}=1$. 
\item Generate i.i.d. pseudo innovations  $\{\eps_t^*$, $ t\in \Z\}$ with mean zero and variance $\sigmah^2$. \label{innovationgene}
\item The pseudo time series  $X_1^*, X_2^*, \ldots, X_n^*$ is then obtained as 
$X_t^*=\sum_{j=0}^\infty \hat c_{j,n} \eps^*_{t-j} +\Xn,$  $t=1,2, \ldots, n,$ where $\Xn=n^{-1} \sum_{t=1}^n X_t$ is the sample mean.
\end{enumerate}

 It should be stressed that the above bootstrap algorithm with i.i.d. pseudo innovations represents a general procedure to generate a pseudo time series stemming from a linear process. 
Regarding the particular generation of the i.i.d. innovations in Step  \ref{innovationgene},  different possibilities can be considered depending on the stochastic properties  of the time series at hand which should be mimicked by the pseudo time series $ X_1^\ast, X_2^\ast, \ldots,, X_n^\ast $. In particular, suppose that $ X_1, X_2, \ldots, X_n$ stems from a linear process and that a statistic $T_n=T(X_1, X_2, \ldots, X_n)$  is considered, the distribution of which should be approximated by the bootstrap. 
We then propose to generate the i.i.d. innovations in a way which 
asymptotically matches the first, the second and the fourth moment structure of the true innovations $ \varepsilon_t$. Matching also the fourth moment structure of $ \varepsilon_t$ turns out to be  important for some statistics $T_n$; we  refer to  Section~\ref{sec.bootval} for examples.

One possibility to achieve this requirement is, to generate the $\eps_t^*$'s as i.i.d. random variables  with  the  following discrete distribution: $P\left(\eps_t^*=\sigmah\sqrt{\tilde\kappa_4}\right)=P\left(\eps_t^*=-\sigmah\sqrt{\tilde\kappa_4}\right)=1/({2\tilde\kappa_4})$ and  $P\left(\eps_t^*=0\right)=1-1/{\tilde\kappa_4}$. Here $\tilde \kappa_4=\tilde \kappa_{4,n}/\sigmah^4>0$ and  $\tilde \kappa_{4,n}$ denotes a consistent  estimator of the fourth moment $E (\eps_1^4)$ of the innovations $\varepsilon_t$. Consistent, nonparametric estimators of $ \kappa_4$ have been proposed in \citeasnoun{JPK2012} and \citeasnoun{Fragkeskou2015}.

In the above bootstrap  algorithm, the pseudo time series $X_1^*, X_2^*, \ldots, X_n^*$ 
is generated using the estimated  coefficients of the moving  average representation. Modifying the algorithm  appropriately, the pseudo time series can be also  generated using the estimated AR representation of the process.  For this, 
we set  $\sigmah^2=2\pi \exp(\hat a_{0,n})$ 
and  calculate the coefficients $\hat b_{k,n}$, $ k=0,1,2,\dots$  using the recursive formula starting with $\hat b_{0,n}=-1$
and $\hat b_{k+1,n}= -\sum_{j=0}^k \left(1 -{j}/({k+1})\right)  \hat a_{k+1-j,n} \hat b_{j,n},$
for $   k=0,1,2,\dots $. Using these estimates of  the  coefficients of the AR representation, the pseudo time series is then obtained  as  $X_t^*= \sum_{j=1}^\infty \hat b_{j,n}  (X_{t-j}^*-\Xn) +  \eps_t^* +\Xn$.

We stress here the fact that the SDDB should not be considered as an MA-sieve bootstrap procedure, where the order of the MA model is allowed to increase to infinity as the sample size increases to infinity. The SDDB procedure is rather governed by the SD estimator $\fn$ used, which appropriately describes the entire autocovariance structure of the underlying process.  The MA representation in this bootstrap procedure is solely used as a device to generate a time series with a second-order structure characterized by the SD estimator $\fn$ used. Notice however, that some SD estimators can implicitly lead to an MA-sieve type bootstrap.

\subsection{Comparison with other Linear Bootstrap Procedures} \label{sec.comp}
The idea of the AR-sieve bootstrap is to fit a $p$-th order  AR model to the time series at hand 
and to use the estimated  model structure together with i.i.d. pseudo innovations  generated according to the empirical distribution function of the centered residuals. In order to fully cover the second-order dependence structure of the underlying process $X$, the order $p$ of the fitted AR-model is allowed to increase  to infinity (at an appropriate rate) as the sample size increases to infinity; see \citeasnoun{kreiss1992bootstrap}, \citeasnoun{paparoditis1991order}, 
and \citeasnoun{buhlmann1997sieve}.
The range of validity of this bootstrap procedure has been investigated in \citeasnoun{kreiss2011}.   
As already mentioned, the AR-sieve bootstrap is  a special case of the SDDB described in Section \ref{sec.boot} when $\fn$ is chosen to be 
a parametric  AR$(p)$ SD  estimator and the innovations $\{ \eps_t^*\}$ are generated  through i.i.d. resampling from the centered  residuals of the AR fit.  Using the estimated AR-parametric SD, the factorization algorithm leads  to a sequence $\{\hat{c}_{k,n} \}$ of estimated MA coefficients that correspond to the MA$(\infty)$ representation obtained by inverting the estimated AR polynomial. 
However, and as already mentioned, the SDDB is a much more general  procedure since it 
is not restricted  to describing the dependence structure of the time series at hand by means of a finite order parametric AR model.  
{Notice that both bootstrap approaches work under similar conditions, see Assumptions $1$ and $2$. However, if a lag window SD estimator is used, there are situations where the SDDB is valid, whereas validity of the AR-sieve is not clear; see Section \ref{sec.bootval} for details.}

The linear process bootstrap, established by \citeasnoun{mcmurry2010banded} is also related to the SDDB. It uses the factorization  of banded autocovariance matrices instead of the SD itself to generate the pseudo observations. A factorization of autocovariance matrices is similar to the innovation algorithm, see \citet[Proposition 5.2.2]{BrockwellDavis1991}. As pointed out at the end of Section \ref{sec.2.2} this leads in finite sample situations to different results. 
 Furthermore, the linear process bootstrap aims to generate a data vector with a given covariance structure, while the SDDB generates a stationary time series. A  more detailed discussion can be found in the Supplementary Material.

\subsection{Bootstrap Validity} \label{sec.bootval}
 
In this section we prove validity  of the proposed SDDB procedure for the sample mean and under quite general dependence assumptions on the underlying process which go far beyond linearity. Furthermore, we show that if  the underlying process is linear,  the same bootstrap procedure driven by i.i.d. pseudo innovations is valid for the   class of so-called generalized autocovariance statistics. We first focus on this  general class  of  statistics.

\begin{definition} \label{statistic}
Let $\{d_p(n), {n \in \Z}\}$ be a sequence of real numbers such that \linebreak $\sum_{h \in \Z} | d_p(h)|<\infty$, where $p \in \{1,2,\dots,P\}$. Let further $g: \R^P \to \R$ be a differentiable function. Then, the {\it generalized autocovariance statistic} is defined as
\begin{equation}
\hat{T}_n=g( \hat{T}_{n,1},\dots,\hat{T}_{n,P}), \text{ where for  }  p \in \{1,\dots,P\}, \label{eq.stat.Tn}
\end{equation}
$ \,\hat{T}_{n,p}= 1/n \sumtn \sum_{h=1-t}^{n-t} d_p(h) (X_t - \Xn) (X_{t+h}-\Xn)$ and  $\Xn = 1/n \sum_{t=1}^n X_t.$  
\end{definition}

The above class of statistics contains, among others, sample autocovariances, sample autocorrelations and lag window SD estimators, cf. the Supplementary Material for details. 

\vspace*{0.2cm}

{\bf Assumption 3:} \   $\{X_t,{t\in \Z}\} $ is a linear process $X_t=\sum_{j \in \Z} \varphi_j \eps_{t-j}+\mu, \mu \in \R$ with i.i.d. innovations $\{\eps_t,  t \in \Z\}$, where $E \eps_t=0$,  $E \eps_t^2=\sigma_\eps^2$, $ E \eps_t^4= \kappa_4$ and $E \eps_t^8<\infty$. We write for short $ \eps_t \sim IID(0,\sigma_\eps^2,\kappa_4)$. The coefficients in the MA representation fulfill the summability condition $\sum_{j \in \Z} |j \varphi_j|< \infty$.

\vspace*{0.2cm}

As the following theorem shows, the proposed SDDB procedure is valid for approximating the distribution of statistics belonging the class of generalized  autocovariances.  Here and in the sequel, for two random variables $X$, and $Y$,  $ d_2(X,Y)$ denotes Mallow's distance, i.e., $d_2(X,Y)=\{ \int_0^1 \left(F_X^{-1} (x)-F_Y^{-1}(x)\right)^2 dx\}^{1/2}$, where $F_X$ and $F_Y$ denote the cumulative distribution functions of $X$ and $Y$, respectively.
\begin{theorem}\label{validity1}
Let 
$
\hat{T}_n^*=g( \hat{T}_{n,1}^*,\dots,\hat{T}_{n,P}^*),
$
where for $p \in \{1,\dots,P\}$,  
$\hat T_{n,p}^*= \linebreak {1}/{n} \sumtn \sum_{h=1-t}^{n-t} d_p(h) (X_t^* -\Xn^*) (X_{t+h}^*-\Xn^*),$
and $(d_p(h))_{h \in \Z}$ is a sequence of real numbers  as  in Definition \ref{statistic}. Furthermore,   $X_1^*, X_2^*, \ldots, X_n^*$   is a pseudo time series generated using the SDDB procedure with a pseudo innovation process  $ \{\eps^*_t, t \in \Z\}$   satisfying 
$ \eps^*_t \sim IID(0,\sigmah^2 , \hat \kappa_{4,n}) $ with   $\hat \kappa_{4,n}=E^*(\eps^*_t)^4$,    a consistent estimator of $\kappa_4$  which also fulfills   $\sup_{n\in \N} \hat \kappa_{4,n} \leq C$ for some constant $C<\infty$ which does not depend on $n$.  Finally, assume  that  the estimated Wold coefficients fulfill $\sum_{k \in \N} | c_k - \hat c_{k,n}| =o_P(1)$ and $\sum_{k \in \N} |k \hat c_{k,n}|\leq C$.
Then under Assumption 3 and as $n\to \infty$,
$
d_2 ( \sqrt{n} ( \hat T_n^* - E^* \hat T_n^*), \sqrt{n} ( \hat T_n - E \hat T_n)) \to 0, \ \mbox{in probability}.
$
\end{theorem}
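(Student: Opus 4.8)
The plan is to linearise the smooth functional $g$ and thereby reduce the claim to a matching of the asymptotic Gaussian laws of the underlying vector of generalised autocovariances, which I would then verify directly in Mallows' metric. Write $\mathbf{T}_n=(\hat T_{n,1},\dots,\hat T_{n,P})$ and $\mathbf{T}=(T_1,\dots,T_P)$ with $T_p=\sum_{h\in\Z}d_p(h)\gamma_h$ its probability limit. Each $\hat T_{n,p}$ is a $\sqrt{n}$-consistent estimator of $T_p$, so differentiability of $g$ gives the linearisation $\sqrt{n}(\hat T_n-E\hat T_n)=\nabla g(\mathbf{T})^{\top}\sqrt{n}(\mathbf{T}_n-E\mathbf{T}_n)+o_P(1)$, the remainder being negligible in $L^2$ under the moment conditions below. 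The same expansion holds in the bootstrap world with gradient $\nabla g(\mathbf{T}_n^{*})$, and $\nabla g(\mathbf{T}_n^{*})\to\nabla g(\mathbf{T})$ in probability because $\mathbf{T}_n^{*}\to\mathbf{T}$ conditionally. It therefore suffices to show that the scalar $\nabla g(\mathbf{T})^{\top}\sqrt{n}(\mathbf{T}_n-E\mathbf{T}_n)$ and its bootstrap analogue converge to one and the same centred Gaussian law and, in addition, that their second moments converge to the common limiting variance; these two facts together are exactly what is needed for the Mallows distance between them to vanish.

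The analytic engine is a central limit theorem for weighted sample autocovariances of a linear process. In the real world I would first rewrite $\hat T_{n,p}$ as a weighted sum $\sum_h d_p(h)\hat\gamma_h$ of sample autocovariances, the mean correction through $\Xn$ and the edge effects coming from the truncated range of $h$ being asymptotically negligible under $\sum_h|d_p(h)|<\infty$ and $\sum_j|j\varphi_j|<\infty$. A Bartlett-type computation then gives, for the building blocks,
\[
n\,\cov(\hat\gamma_h,\hat\gamma_{h'})\longrightarrow \sum_{k\in\Z}\big(\gamma_k\gamma_{k+h'-h}+\gamma_{k+h'}\gamma_{k-h}\big)+\Big(\frac{\kappa_4}{\sigma_\eps^4}-3\Big)\gamma_h\gamma_{h'},
\]
so that $\sqrt{n}(\mathbf{T}_n-E\mathbf{T}_n)\Rightarrow N(0,\Sigma)$ under Assumption 3, where $\Sigma_{p,q}$ decomposes into a purely second-order part, a function of the autocovariances $\gamma_h$ alone, and a fourth-cumulant part equal to $(\kappa_4/\sigma_\eps^4-3)\,T_pT_q$. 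The condition $E\eps_t^8<\infty$ together with $\sum_j|j\varphi_j|<\infty$ is what I would invoke to justify both the weak convergence (by a martingale or cumulant argument) and the convergence of variances. Conditionally on the data the bootstrap series $X_t^{*}=\sum_k\hat c_{k,n}\eps_{t-k}^{*}+\Xn$ is itself an i.i.d.-driven linear process, so the identical argument yields $\sqrt{n}(\mathbf{T}_n^{*}-E^{*}\mathbf{T}_n^{*})\Rightarrow N(0,\Sigma^{*})$ in probability; here the hypotheses $\sum_k|k\hat c_{k,n}|\le C$ and $\sup_n\hat\kappa_{4,n}\le C$ play the role of $\sum_j|j\varphi_j|<\infty$ and $E\eps_t^8<\infty$, ensuring the CLT machinery applies uniformly along the random array of coefficients.

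It remains to check that $\Sigma^{*}\to\Sigma$ in probability and to upgrade the weak convergence to convergence in $d_2$. The second-order part matches because $\sum_k|c_k-\hat c_{k,n}|=o_P(1)$ and $\sigmah^2\to\sigma^2$ force the bootstrap autocovariances to satisfy $\hat\gamma_h\to\gamma_h$, uniformly in $h$ by the summability of the coefficients. For the fourth-cumulant part I would use consistency of $\hat\kappa_{4,n}$ and of $\sigmah^2$ to obtain $\hat\kappa_{4,n}/\sigmah^4-3\to\kappa_4/\sigma_\eps^4-3$, so that $\Sigma^{*}\to\Sigma$ and the two limiting Gaussians coincide with common variance $\tau^2=\nabla g(\mathbf{T})^{\top}\Sigma\,\nabla g(\mathbf{T})$. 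Finally, since $d_2$-convergence is equivalent to weak convergence plus convergence of second moments, I would establish $\var(\sqrt{n}(\hat T_n-E\hat T_n))\to\tau^2$ and $\var^{*}(\sqrt{n}(\hat T_n^{*}-E^{*}\hat T_n^{*}))\to\tau^2$ in probability, the requisite uniform integrability following from $E\eps_t^8<\infty$, $\sup_n\hat\kappa_{4,n}\le C$ and the summability of the weighted coefficients, and conclude via the triangle inequality $d_2(S_n^{*},S_n)\le d_2(S_n^{*},N(0,\tau^2))+d_2(N(0,\tau^2),S_n)\to 0$, where $S_n$ and $S_n^{*}$ denote the two centred and scaled statistics.

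The main obstacle I expect is the conditional central limit theorem for the weighted bootstrap autocovariances together with the exact matching of the fourth-cumulant contribution. On the CLT side, the bootstrap is a triangular-array linear process whose coefficients $\hat c_{k,n}$ are themselves random and depend on $n$, so the usual fourth-order moment and cumulant bounds must be shown to hold uniformly along this random sequence; this is where $\sum_k|k\hat c_{k,n}|\le C$ and $\sup_n\hat\kappa_{4,n}\le C$ are indispensable. On the matching side, the delicate point is that the bootstrap reproduces the correct fourth-order term only if its standardised fourth cumulant $\hat\kappa_{4,n}/\sigmah^4$ converges to the true standardised fourth cumulant $\kappa_4/\sigma_\eps^4$ of the innovations; verifying that the prescribed choice of $\hat\kappa_{4,n}$ indeed achieves this, and that the fourth-cumulant part is otherwise controlled entirely through the second-order quantities $T_p$, is the heart of the argument. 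Upgrading weak convergence to $d_2$, that is, the second-moment and uniform integrability control, is comparatively routine but relies essentially on the eighth-moment assumption.
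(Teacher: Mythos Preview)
Your proposal is correct and follows essentially the same architecture as the paper's proof: reduce via the delta method to a single linear combination, establish a conditional CLT for the bootstrap generalised autocovariance with limiting variance given by the Bartlett formula plus the fourth-cumulant correction, match $\Sigma^{*}\to\Sigma$ using $\sum_k|c_k-\hat c_{k,n}|=o_P(1)$ and $\hat\kappa_{4,n}\to\kappa_4$, and then pass to $d_2$ by adding second-moment convergence to weak convergence.

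The one place where the paper is more explicit than you are is the CLT step for the bootstrap statistic. Rather than a generic ``martingale or cumulant argument'', the paper truncates the inner sum at a fixed lag $M$, writes $\sqrt{n}(T_n^{*M}-E^*T_n^{*M})=\sum_t Z_{t,n}^{*}$, and applies a triangular-array CLT for weakly dependent sequences (Neumann-type), verifying the required covariance-decay condition directly from the one-sided linear representation $X_t^{*}=\sum_k\hat c_{k,n}\eps_{t-k}^{*}$; the bound $\sum_k|k\hat c_{k,n}|\le C$ is used there to produce a summable sequence $\theta_r$ controlling $\sum_{j\ge r-M}|\hat c_{j,n}|$. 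The truncation is then removed by a version in probability of Billingsley's approximation theorem. This packaging handles the random, $n$-dependent coefficients cleanly and needs only the uniform fourth-moment bound $\sup_n\hat\kappa_{4,n}\le C$ on the bootstrap innovations (not an eighth moment, which is reserved for the real-world statistic). Your sketch would go through with any CLT that accommodates triangular arrays of linear processes with uniformly summable coefficients, but be aware that this is the technical hinge and the paper's truncate-then-approximate route is what makes it tractable.
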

The assumptions $\sup_{n\in \N} \hat \kappa_{4,n} \leq C < \infty$ and $\sum_{k \in \N} |k \hat c_{k,n}|\leq C$ are of rather technical nature  and can be satisfied  by using  appropriate estimators of $\hat{\kappa}_4$ and $\fn$. If the SD estimator $\fn$ fulfills $\sup_{\lambda \in (-\pi,\pi]} \frac{\dx^3}{\dx \lambda^3} \log \fn (\lambda) \leq C$ then the requirement $\sum_{k \in \N} |k \hat c_{k,n}|\leq C$ of the above theorem is satisfied. Notice that sufficiently smooth kernels guarantee the required differentiability of $\log \fn$. Furthermore, by using an appropriate truncation, boundedness of $\hat \kappa_{4,n}$ and $\fn$ can also be guaranteed.

In Section \ref{sect2} we gave conditions under which $\sum_{k \in \N} | c_k - \hat c_{k,n}| =o_P(1)$ holds, see Theorem \ref{consistence} and \ref{maseries}. Moreover, there are settings in which it is not clear whether the AR-sieve bootstrap is valid while the SDDB in connection with a lag window SD estimator can lead to a valid approximation. For instance, the SDDB remains valid for statistics $\hat{T}_n$ as in $(\ref{eq.stat.Tn})$ when the time series is generated by finite MA processes with unit roots, like for instance by the process $X_t=\eps_t - \eps_{t-1}$ or even by nonlinear continuous transformations of $M$-dependent stationary processes.

The following theorem establishes  validity of the SDDB for the case of the sample mean, which is not 
covered by the class of general covariance statistics $T_n$ given in  (\ref{statistic}). Notice, that for this case, it suffices that the pseudo innovations $ \{\varepsilon_t^\ast\}$   mimic asymptotically correct only the first and the second moment of the true innovations $\varepsilon_t$. Furthermore, no linearity assumptions of the underlying processes $X$ are needed. What is needed is that $\sqrt n (\bar X_n-\mu)$ converges to a normal distribution with variance $2\pi f(0)$, which, however, is fulfilled for a huge class of stationary processes. For instance, appropriate mixing or weak dependence conditions are sufficient for this statistic to satisfy the required asymptotic normality of $\sqrt n (\Xn-\mu)$. Furthermore, regarding the SDDB, the SD $f$ and its estimator $\fn$ need to fulfill less restrictive conditions. In particular, for a lag window SD estimator $\fn$, the assumptions $|\gamma(h) |\leq C /|h|^{2+\eps}$ and $\sup_t \sum_{t_1,t_2,t_3} |\cum(X_t,X_{t_1},X_{t_2},X_{t_3})|<\infty $, see \citet[Lemma A.2]{Jentsch2015}, suffice to ensure uniform consistency of $\fn$. 

\begin{theorem}\label{validity2}
Assume  that  $\{X_t : t \in \Z\}$ is a purely nondeterministic stationary process with mean $\mu$, SD $f$, and autocovariance $\gamma$ with $\sum_h |\gamma(h)| <\infty$ and assume that $\sqrt n (\Xn-\mu) \to \mathcal{N}(0,2\pi f(0))$, as $n\to \infty$. Denote by $\fn$ a uniformly consistent and bounded estimator of $f$ fulfilling Assumptions $1$ and $\sum_{k=0}^\infty |\ckn| <C,$ where $C$ does not depend on $n$. Assume that $X_1^*, X_2^*, \ldots, X_n^*$ is generated using the SDDB procedure with an i.i.d. innovation process  $ \{\eps^*_t, t \in \Z\}$,  where  $ E^*(\eps_t^*)=0$, $ E^*(\eps_t^*)^2=\sigmah^2$, and $E^*(\eps_t^*)^4<C<\infty$. Then, as $n\to \infty$,
$ d_2(\sqrt n  (\bar{X}_n^*- \Xn), \sqrt n  (\bar{X}_n-\mu)) \to 0$, in probability.
\end{theorem}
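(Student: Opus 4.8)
The plan is to use that Mallows' $d_2$ metric characterises weak convergence together with convergence of second moments, so that a (possibly random) law converges to $\mathcal N(0,2\pi f(0))$ in $d_2$ iff it converges weakly to $\mathcal N(0,2\pi f(0))$ and its second moment tends to $2\pi f(0)$. Both $\sqrt n(\bar X_n^*-\Xn)$ (conditionally on the data, in probability) and $\sqrt n(\Xn-\mu)$ will be shown to converge to the \emph{same} Gaussian law $\mathcal N(0,2\pi f(0))$, so by the triangle inequality for $d_2$ it suffices to prove each converges to $\mathcal N(0,2\pi f(0))$ in $d_2$. For the real-world statistic this is immediate: $\sqrt n(\Xn-\mu)\to\mathcal N(0,2\pi f(0))$ weakly by assumption, and $n\var(\Xn)=\sum_{|h|<n}(1-|h|/n)\gamma(h)\to 2\pi f(0)=\sum_h\gamma(h)$ since $\sum_h|\gamma(h)|<\infty$. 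It therefore remains to treat the bootstrap statistic, for which I would verify (a) convergence of the conditional variance and (b) a conditional central limit theorem.

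First I would write the centred bootstrap mean as a weighted sum of the i.i.d. pseudo innovations. Since $X_t^*-\Xn=\sum_{j\ge0}\hat c_{j,n}\eps_{t-j}^*$, interchanging the order of summation gives
\[
\sqrt n(\bar X_n^*-\Xn)=\frac1{\sqrt n}\sumtn\sum_{j\ge0}\hat c_{j,n}\,\eps_{t-j}^*=\sum_{s\le n}a_{s,n}\,\eps_s^*,\qquad a_{s,n}=\frac1{\sqrt n}\sum_{t=1}^n\hat c_{t-s,n},
\]
with the convention $\hat c_{k,n}=0$ for $k<0$. Conditionally on the data the summands $a_{s,n}\eps_s^*$ are independent with mean zero, and the weights $w_{s,n}:=\sqrt n\,a_{s,n}=\sum_{t=1}^n\hat c_{t-s,n}$ are a partial sum of $\{\hat c_{k,n}\}_{k\ge0}$, hence uniformly bounded, $|w_{s,n}|\le\sum_{k\ge0}|\hat c_{k,n}|\le C$; this is the only place the hypothesis $\sum_k|\hat c_{k,n}|<C$ enters the limit theorem.

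For the variance I would exploit that, by construction, the pseudo process has spectral density exactly $\fn$: relation $(\ref{phase})$ applied to $\fn$ yields $\fn(\lambda)=(2\pi)^{-1}\sigmah^2|\sum_j\hat c_{j,n}e^{-ij\lambda}|^2$, so its bootstrap autocovariances are $\gamma^*(h)=\int_{-\pi}^\pi e^{ih\lambda}\fn(\lambda)\,d\lambda$ and
\[
\var^*\big(\sqrt n(\bar X_n^*-\Xn)\big)=\sum_{|h|<n}\Big(1-\frac{|h|}{n}\Big)\gamma^*(h)=\int_{-\pi}^\pi\Phi_n(\lambda)\fn(\lambda)\,d\lambda,
\]
where $\Phi_n(\lambda)=\sum_{|h|<n}(1-|h|/n)e^{ih\lambda}\ge0$ is the Fej\'er kernel with $\int_{-\pi}^\pi\Phi_n=2\pi$. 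Splitting $\int\Phi_n\fn=\int\Phi_n(\fn-f)+\int\Phi_n f$, the first term is bounded by $2\pi\sup_\lambda|\fn(\lambda)-f(\lambda)|\to0$ in probability by Assumption 1, while the second converges to $2\pi f(0)$ because $\Phi_n$ is an approximate identity and $f$ is continuous at $0$. Hence $V_n:=\var^*(\sqrt n(\bar X_n^*-\Xn))\to2\pi f(0)$ in probability and, in particular, stays bounded.

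Finally I would obtain the conditional central limit theorem by verifying Lyapunov's condition for the independent array $\{a_{s,n}\eps_s^*\}$. With $\hat\kappa_{4,n}=E^*(\eps_t^*)^4\le C$,
\[
\frac{\sum_s E^*|a_{s,n}\eps_s^*|^4}{V_n^2}=\frac{\hat\kappa_{4,n}\,n^{-2}\sum_s w_{s,n}^4}{V_n^2}\le\frac{C\,n^{-2}\big(\sup_s w_{s,n}^2\big)\sum_s w_{s,n}^2}{V_n^2}\le\frac{C^3\,n^{-1}\sigmah^{-2}V_n}{V_n^2},
\]
using $|w_{s,n}|\le C$ and $n^{-1}\sigmah^2\sum_s w_{s,n}^2=V_n$. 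Since $\sigmah^2=2\pi\exp(\hat a_{0,n})\to\sigma^2>0$ (Theorem \ref{consistence}) and $V_n\to2\pi f(0)$, this ratio is $O(1/n)\to0$ whenever $f(0)>0$, so Lindeberg--Feller gives conditional asymptotic normality with variance $V_n\to2\pi f(0)$; together with the previous step this is exactly conditional $d_2$ convergence to $\mathcal N(0,2\pi f(0))$. The degenerate case $f(0)=0$ needs no CLT, since then $V_n\to0$ and $d_2(\text{law}^*(\sqrt n(\bar X_n^*-\Xn)),\delta_0)^2=V_n\to0$ directly. I expect the main obstacle to be the variance step carried out uniformly in $n$: the coefficients $\hat c_{k,n}$ are themselves random and $n$-dependent, so controlling $\sum_{|h|<n}(1-|h|/n)\gamma^*(h)$ by elementary summation would require a tail bound on $\{\hat c_{k,n}\}$ that is not assumed here; routing the computation through the Fej\'er-kernel identity and the uniform consistency of $\fn$ is precisely what circumvents this difficulty.
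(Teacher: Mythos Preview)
Your argument is correct and is a genuinely different---and somewhat more elementary---route than the one taken in the paper. The paper's proof proceeds by an $M$-dependent truncation $X_{t,n,M}^*=\sum_{k=0}^M\hat c_{k,n}\eps_{t-k}^*$, applies a CLT for triangular arrays of $M$-dependent variables (\citeasnoun{romano2000more}) to obtain the limit $\mathcal N(0,2\pi f_M(0))$ for fixed $M$, and then lets $M\to\infty$ via an in-probability version of Theorem~4.2 of \citeasnoun{billingsley}; the variance $2\pi\fn(0)\to 2\pi f(0)$ is argued directly from the absolute summability of $\{\gammah(h)\}$. Your approach instead exploits the fact that, conditionally on the data, $\sqrt n(\bar X_n^*-\Xn)$ is a weighted sum of \emph{independent} pseudo innovations: this lets you bypass the $M$-truncation entirely and appeal to a straightforward Lyapunov condition, with the crucial bound $|w_{s,n}|\le\sum_k|\hat c_{k,n}|\le C$ supplying the needed smallness of individual summands. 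Your Fej\'er-kernel calculation of the variance is particularly clean, since it converts the question directly into uniform consistency of $\fn$ and continuity of $f$ at $0$ (the latter following from $\sum_h|\gamma(h)|<\infty$), thereby avoiding any separate tail control on $\{\hat c_{k,n}\}$ or $\{\gammah(h)\}$. The paper's proof is more uniform with the machinery used for Theorem~\ref{validity1}, but yours is shorter for this specific statistic. One small point worth making explicit: the weighted sum $\sum_{s\le n}a_{s,n}\eps_s^*$ has countably many terms, so when invoking Lindeberg--Feller you are implicitly truncating to finitely many $s$ and using $\sum_s a_{s,n}^2<\infty$ to absorb the remainder in $L^2$; this is routine but should be stated.
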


The assumption $\sum_{k=0}^\infty | \ckn| < C$ is satisfied if a strictly positive, differentiable, and bounded SD estimator $\fn$ is used.

Notice that validity of block bootstrap approaches is often established for so-called generalized mean statistics, see \citet[Example 2.2]{kunsch1989}. For a time series $X_1,\dots,X_n$, this class of statistics is given by
$T_n=h\left(1/(n-m+1) \sum_{t=1}^{n-m+1} Y_t\right)$, where $h : R^k \to R^s, s\leq k,$ and $Y_t=g(X_t,X_{t+1},\dots,X_{t+m-1}), t=1,\dots,n-m+1, g : \R^m \to R^k, k \leq m<n$ and $m$ is fixed. Let $\tilde n=n-m$. The validity of the SDDB for this class can be derived by applying the results of Theorem \ref{validity2}. The stated cumulant and autocovariance conditions have to be fulfilled by the process $\{Y_t, t \in \Z\}$.
\begin{corollary}\label{corollary0}
Let $Y=\{Y_t : t \in \Z\}$ fulfill the assumptions of Theorem \ref{validity2} and denote the mean by $\mu_Y= E Y_1$. Furthermore, assume that $h$ is differentiable at $\mu_Y$ and $Y_1^*,\dots,Y_{\tilde n}^*$ is generated using the SDDB procedure with an i.i.d. innovation process  $ \{\eps^*_t, t \in \Z\}$,  where  $ E^*(\eps_t^*)=0$, $ E^*(\eps_t^*)^2=\sigmah^2$, and $E^*(\eps_t^*)^4<C<\infty$. Then, as $\tilde n\to \infty$,
$ d_2(\sqrt {\tilde n}  (h(\bar{Y}_{\tilde n}^*)- h(E^* Y*)), \sqrt {\tilde n}  (h(\bar{Y}_{\tilde n})-h(\mu_Y)) \to 0$, in probability.
\end{corollary}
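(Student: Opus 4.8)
The plan is to deduce the corollary from Theorem~\ref{validity2} by a delta-method argument carried out directly in Mallows distance. First I would apply Theorem~\ref{validity2} to the process $Y$, treating the vector-valued case by working with one-dimensional projections $\alpha^{\top} Y$ and invoking the Cram\'er--Wold device; under the stated hypotheses this gives
\[
d_2\big(\sqrt{\tilde n}(\bar Y_{\tilde n}^* - E^* Y^*),\ \sqrt{\tilde n}(\bar Y_{\tilde n}-\mu_Y)\big)\to 0 \quad\text{in probability,}
\]
where $E^* Y^* = E^*\bar Y_{\tilde n}^* = \bar Y_{\tilde n}$ by construction of the SDDB. Since $\sum_h|\gamma_Y(h)|<\infty$ forces the scaled sample-mean variances to converge to $2\pi f_Y(0)$, both scaled and centred means are tight and $d_2$-close to the common Gaussian limit $\mathcal N(0,2\pi f_Y(0))$.

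Next I would linearise $h$ at $\mu_Y$: differentiability gives $h(y)=h(\mu_Y)+Dh(\mu_Y)(y-\mu_Y)+r(y)$ with $r(y)=o(\|y-\mu_Y\|)$ as $y\to\mu_Y$. Subtracting the expansions at $\bar Y_{\tilde n}^*$ and at $\mu_Y$ (using $E^*Y^*=\bar Y_{\tilde n}$) yields
\[
\sqrt{\tilde n}\big(h(\bar Y_{\tilde n}^*)-h(E^*Y^*)\big)=Dh(\mu_Y)\sqrt{\tilde n}(\bar Y_{\tilde n}^*-E^*Y^*)+R_{\tilde n}^*,
\]
and analogously $\sqrt{\tilde n}(h(\bar Y_{\tilde n})-h(\mu_Y))=Dh(\mu_Y)\sqrt{\tilde n}(\bar Y_{\tilde n}-\mu_Y)+R_{\tilde n}$, with $R_{\tilde n}^*=\sqrt{\tilde n}[r(\bar Y_{\tilde n}^*)-r(\bar Y_{\tilde n})]$ and $R_{\tilde n}=\sqrt{\tilde n}\,r(\bar Y_{\tilde n})$. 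Because $Dh(\mu_Y)$ is a fixed linear map and $d_2(LU,LV)\le\|L\|\,d_2(U,V)$, applying $Dh(\mu_Y)$ to both preserves the closeness from the first step; combining this with the triangle inequality for $d_2$ and the coupling bound $d_2(\mathcal L(U+V),\mathcal L(U))\le\|V\|_{L^2}$ reduces the whole statement to showing $\|R_{\tilde n}\|_{L^2}\to 0$ and $\|R_{\tilde n}^*\|_{L^2(P^*)}\to 0$ in probability.

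The $L^2$-control of the remainders will be the main obstacle: mere $o_P$-smallness does not suffice, since $d_2$ is sensitive to second moments. I would handle it by splitting on the event $\{\|\bar Y_{\tilde n}-\mu_Y\|\le\delta\}$ and its bootstrap analogue: there $r(y)=o(\|y-\mu_Y\|)$ bounds the remainder by $\eta\,\sqrt{\tilde n}\|\bar Y_{\tilde n}-\mu_Y\|$ for arbitrarily small $\eta$, and passing this pointwise bound to $L^2$ uses the uniform integrability of $\{\tilde n\|\bar Y_{\tilde n}-\mu_Y\|^2\}$ and of its bootstrap counterpart, which I would derive from the moment bounds inherited from Theorem~\ref{validity2} ($\sum_h|\gamma_Y(h)|<\infty$ and $E^*(\eps_t^*)^4<C$, giving bounded, converging scaled second moments). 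The contribution of the complementary far event is negligible by the tightness of the first step, together with the continuity (hence local boundedness) of $h$ near $\mu_Y$ and a mild growth control away from it. Once both remainders are $L^2$-negligible, the three-term bound collapses to the $d_2$-distance between the linear parts, which vanishes by the first step; reverting from the $s$-dimensional $h$ to the Mallows distance of the statement is then routine via the same projection argument.
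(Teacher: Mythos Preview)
Your approach is essentially the paper's: apply Theorem~\ref{validity2} to the $Y$-process and then invoke the delta method, which is exactly what the paper's one-line proof does. You are considerably more explicit than the paper about why the delta method yields convergence in $d_2$ (not merely in distribution), and your splitting into a linear part plus an $L^2$-small remainder is the right way to make that rigorous; the only caveat is that the ``mild growth control away from $\mu_Y$'' you invoke for the far event is not part of the stated hypotheses (only differentiability at $\mu_Y$ is assumed), so strictly speaking neither your argument nor the paper's sketch closes the $L^2$-remainder bound without some such extra regularity on $h$.
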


An improved finite sample performance of bootstrap approximations is often achieved by applying the bootstrap to studentized statistics, see for instance \citeasnoun[Chapter 6]{lahiri2003resampling}; \cite{gotze1996second,romano2006improved}. A studentized form is obtained by normalizing the statistic of interest with a consistent estimator of the asymptotic standard deviation. Since in Theorem \ref{validity2} the asymptotic variance is given by $2\pi f(0)$ and this quantity can be consistently estimated, we get $\sqrt n (\bar{X}_n- \mu)/(2\pi \tilde f_n(0))^{-1/2}$ as a studentized statistic where $\fn$ is a consistent estimator of $f$. A bootstrap approximation of this studentized statistic is then given by 
$\sqrt n (\bar{X}_n^*- \Xn)/(2\pi \tilde f_n^*(0))^{-1/2}$, where $\tilde f_n^*$ is the same SD estimator as $\tilde f_n$ obtained using the pseudo observations $X_{1}^*,\dots,X_n^*$. 

\begin{corollary}\label{corollary1}
Let $f(0)>0$ and $\tilde f_n(0), \tilde f_n^*(0)$ be consistent estimators of $f(0)$ which are bounded from below by $\delta>0$. Under the assumption of Theorem \ref{validity2} and if the SD estimator used for the SDDB is two times differentiable with a second derivative of bounded variation independent from $n$, then, as $n \to \infty$, $ d_2(\sqrt n  (\bar{X}_n^*- \Xn)  /(2\pi \tilde f_n^*(0))^{1/2}, \sqrt n  (\bar{X}_n-\mu)/(2\pi \tilde f_n(0))^{1/2}) \to 0$, in probability.
\end{corollary}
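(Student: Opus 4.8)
The plan is to peel off the random studentization from the already-established unstudentized result, Theorem~\ref{validity2}, by a triangle-inequality decomposition that uses the scaling identity $d_2(aX,aY)=|a|\,d_2(X,Y)$ of Mallow's distance. Put $S_n=\sqrt n(\Xn-\mu)$ and $S_n^*=\sqrt n(\bar{X}_n^*-\Xn)$, let $d=(2\pi f(0))^{1/2}>0$ denote the deterministic limiting scale, and write the random normalizers as $D_n=(2\pi\tilde f_n(0))^{1/2}$ and $D_n^*=(2\pi\tilde f_n^*(0))^{1/2}$. Then
\begin{align*}
d_2\!\left(\frac{S_n^*}{D_n^*},\frac{S_n}{D_n}\right)
\le d_2\!\left(\frac{S_n^*}{D_n^*},\frac{S_n^*}{d}\right)
+\frac1d\,d_2\!\left(S_n^*,S_n\right)
+d_2\!\left(\frac{S_n}{d},\frac{S_n}{D_n}\right),
\end{align*}
and the middle term tends to $0$ in probability by Theorem~\ref{validity2}. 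It therefore remains to show that the two boundary terms, which each compare studentization by a random normalizer with studentization by its deterministic limit, are negligible.

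For the boundary terms I would invoke the identity coupling, admissible because $d_2$ is an infimum over couplings, to get
\begin{align*}
d_2\!\left(\frac{S_n}{d},\frac{S_n}{D_n}\right)^2\le E\!\left[S_n^2\Big(\frac1d-\frac1{D_n}\Big)^2\right],\qquad
d_2\!\left(\frac{S_n^*}{D_n^*},\frac{S_n^*}{d}\right)^2\le E^*\!\left[(S_n^*)^2\Big(\frac1{D_n^*}-\frac1d\Big)^2\right].
\end{align*}
In the real-world bound the factor $(1/d-1/D_n)^2$ is uniformly bounded, since $\tilde f_n(0)\ge\delta>0$, and tends to $0$ in probability because $\tilde f_n(0)\overset{P}{\to}f(0)$; combined with uniform integrability of $\{S_n^2\}$, which follows from $\sum_h|\gamma(h)|<\infty$ together with the fourth-order cumulant conditions already needed for the lag-window SD estimator, this drives the expectation to $0$. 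The bootstrap bound is treated identically once one knows that $E^*[(S_n^*)^2]$ is bounded in probability, which follows from $\sum_k|\ckn|<C$, and that $\{(S_n^*)^2\}$ is uniformly integrable in the bootstrap world, which follows from $E^*(\eps_t^*)^4<C$.

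The crux, and the single place where the extra smoothness hypothesis enters, is the bootstrap consistency $\tilde f_n^*(0)\to f(0)$ in probability, which makes the bootstrap factor $(1/d-1/D_n^*)^2$ negligible. Here I would exploit that the SDDB pseudo series is by construction a linear process whose spectral density is exactly $\fn$, so that its autocovariances $\gamma^*(h)=\sigmah^2\sum_{k}\ckn\,\hat c_{k+|h|,n}$ are the Fourier coefficients of $\fn$ and $2\pi\fn(0)=\sum_h\gamma^*(h)$ with $\fn(0)\to f(0)$. Writing $\tilde f_n^*(0)$ as a lag-window functional of the bootstrap sample autocovariances, I would split it into its bootstrap mean plus a stochastic remainder. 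The hypothesis that $\fn$ is twice differentiable with a second derivative of uniformly bounded variation yields the uniform decay $\sup_n |h|^2|\gamma^*(h)|\le C$, hence uniform summability of $\{\gamma^*(h)\}$; this annihilates the truncation bias of the lag-window estimator uniformly in $n$ and sends its bootstrap mean to $\fn(0)\to f(0)$, while the stochastic remainder is negligible because its bootstrap variance is a fourth-order bootstrap cumulant expression controlled by $E^*(\eps_t^*)^4<C$ and the summability of $\{\ckn\}$. With $\tilde f_n^*(0)\to f(0)$ in probability in hand, both boundary terms vanish and the displayed bound yields the claim.
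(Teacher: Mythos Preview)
Your argument is correct but follows a different route from the paper. The paper does not insert the deterministic scale $d=(2\pi f(0))^{1/2}$ as an intermediate; instead it shows separately that each of $S_n/D_n$ and $S_n^*/D_n^*$ converges in $d_2$ to $\mathcal{N}(0,1)$ --- weak convergence via Slutsky, second-moment convergence to $1$ via an explicit splitting on the event $\{|\tilde f_n(0)-f(0)|<\eps_n\}$ --- and then concludes by the triangle inequality through $\mathcal{N}(0,1)$. Your three-term decomposition is more direct in that it recycles Theorem~\ref{validity2} verbatim for the middle piece (via the scaling identity for $d_2$) and dispatches the two boundary pieces by the identity coupling, so you never need to name the limit law. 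Both approaches rest on the same crux, $\tilde f_n^*(0)\to f(0)$ in probability, and both extract it from the smoothness hypothesis on $\fn$: the paper argues that this forces $|\gammah(h)|\le C|h|^{-2-\eps}$ and that the bootstrap fourth-order cumulants are summable, and then invokes the uniform-consistency result for lag-window estimators applied to the pseudo sample; your Fourier-decay argument is essentially the same mechanism.

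One small remark: you justify uniform integrability of $\{S_n^2\}$ by appealing to ``fourth-order cumulant conditions already needed for the lag-window SD estimator'', but no such conditions appear among the hypotheses of the corollary (and $\tilde f_n$ is not assumed to be a lag-window estimator). A cleaner justification, available from the hypotheses of Theorem~\ref{validity2} alone, is that $S_n\Rightarrow\mathcal{N}(0,2\pi f(0))$ together with $E S_n^2=\sum_{|h|<n}(1-|h|/n)\gamma(h)\to 2\pi f(0)$ already forces $\{S_n^2\}$ to be uniformly integrable; the bootstrap analogue follows from $E^*(\eps_t^*)^4<C$ and $\sum_k|\ckn|<C$ as you indicate.
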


 The asymptotic variance of the generalized autocovariance statistic depends on the SD and it may also depend on the fourth moment $\kappa_4$ of the underlying innovations of the linear process. This fourth moment can be estimated consistently, by say $\hat \kappa_4$; see \citeasnoun{Fragkeskou2015}. Since the pseudo time series $\{X_t^*\}$ is driven by i.i.d. innovations, the fourth moment of $\{\eps_t^*\}$ can be estimated  using the same estimator as for $\kappa_4$. Consequently, an asymptotically valid approximation of the SDDB for studentized generalized autocovariance statistics can be established. This is done in the following corollary, where, and in order to simplify notation, only the case $P=1$ is considered. In this case the statistic of interest is given by $\hat{T}_{n}= 1/n \sumtn \sum_{h=1-t}^{n-t} d(h) (X_t - \Xn) (X_{t+h}-\Xn)$ and the asymptotic variance by $\tau^2=(\kappa_{4}/\sigma^4-3) (\int_{0}^{2\pi} f(\lambda) \sum_{h \in \Z} d(h) \exp(ih\lambda)\dlambdaB)^2 + 4 \pi \int_{0}^{2\pi} |f(\lambda)  \sum_{h \in \Z} d(h) \exp(ih\lambda)|^2 \dlambdaB$. 
\begin{corollary}\label{corollary2}
Let $\tau^2>\delta>0 $ and let  $\tilde f_n, \tilde f_n^*$ be consistent SD estimators which are bounded from below by $\delta>0$. Furthermore, let $\tilde \kappa_{4,n}, \tilde \kappa_{4,n}^*$ be consistent estimators of $\kappa_{4}$.  Under the assumptions of Theorem \ref{validity1} and if $E^* (\eps_1^*)^8<C$ independent from $n$ then, as $n \to \infty$,
$d_2 ( \sqrt{n} ( \hat T_n^* - E^* \hat T_n^*)/\tilde \tau_n^*, \sqrt{n} ( \hat T_n - E \hat T_n)/\tilde \tau_n)\to 0$, in probability.
\end{corollary}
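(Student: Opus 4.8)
The plan is to peel off the random studentizers by comparing each ratio to the version scaled by the deterministic limit $\tau$, letting Theorem~\ref{validity1} carry the central work. Writing $Z_n=\sqrt n(\hat T_n-E\hat T_n)$ and $Z_n^*=\sqrt n(\hat T_n^*-E^*\hat T_n^*)$, I would start from the triangle inequality
\[
d_2\!\left(\tfrac{Z_n^*}{\tilde\tau_n^*},\tfrac{Z_n}{\tilde\tau_n}\right)\le d_2\!\left(\tfrac{Z_n^*}{\tilde\tau_n^*},\tfrac{Z_n^*}{\tau}\right)+d_2\!\left(\tfrac{Z_n^*}{\tau},\tfrac{Z_n}{\tau}\right)+d_2\!\left(\tfrac{Z_n}{\tau},\tfrac{Z_n}{\tilde\tau_n}\right).
\]
The middle term equals $\tau^{-1}d_2(Z_n^*,Z_n)$, which tends to $0$ in probability by Theorem~\ref{validity1}; it then remains only to handle the two outer terms, each of which compares two scalings of one and the same random variable.

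For these outer terms I would exploit that $d_2$ is bounded above by the $L^2$ cost of any coupling, in particular the natural one that uses the same $Z_n$ (resp.\ $Z_n^*$) and the same studentizer. This gives, for the last term,
\[
d_2\!\left(\tfrac{Z_n}{\tau},\tfrac{Z_n}{\tilde\tau_n}\right)^2\le E\big[Z_n^2(\tau^{-1}-\tilde\tau_n^{-1})^2\big]\le \big\{E Z_n^4\big\}^{1/2}\big\{E(\tau^{-1}-\tilde\tau_n^{-1})^4\big\}^{1/2},
\]
and analogously for the first term with $E^*$ in place of $E$ and $(Z_n^*,\tilde\tau_n^*)$ in place of $(Z_n,\tilde\tau_n)$. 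Since $\hat T_n$ is a quadratic form in the observations, $E Z_n^4$ stays bounded under the eighth-moment part of Assumption~3, and $E^*(Z_n^*)^4=O_P(1)$ is exactly what the extra hypothesis $E^*(\eps_1^*)^8<C$ (together with $\sum_{k}|k\hat c_{k,n}|\le C$ inherited from Theorem~\ref{validity1}) secures. The scaling factors then vanish once I show $\tilde\tau_n\overset{P}{\to}\tau$ and $\tilde\tau_n^*\overset{P^*}{\to}\tau$ in probability, because the lower bounds $\tilde f_n,\tilde f_n^*\ge\delta$ together with $\tau^2>\delta$ keep $\tilde\tau_n^{-1},\tilde\tau_n^{*-1}$ bounded, so dominated convergence sends the fourth moments of $\tau^{-1}-\tilde\tau_n^{-1}$ and of $\tau^{-1}-\tilde\tau_n^{*-1}$ to zero.

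It therefore remains to establish consistency of the two studentizers. As $\tau^2$ is a continuous functional of $f$ and $\kappa_4$, consistency of $\tilde f_n$ and $\tilde\kappa_{4,n}$ yields $\tilde\tau_n\overset{P}{\to}\tau$ by the continuous mapping theorem. For the bootstrap studentizer the argument is conditional: the pseudo series $X_1^*,\dots,X_n^*$ is a linear process whose bootstrap spectral density is exactly $\tilde f_n$ and whose innovation fourth moment is $\hat\kappa_{4,n}$; since these converge to $f$ and $\kappa_4$, the estimators $\tilde f_n^*$ and $\tilde\kappa_{4,n}^*$ formed from the pseudo data converge in bootstrap probability to $f$ and $\kappa_4$, whence $\tilde\tau_n^*\overset{P^*}{\to}\tau$ in probability. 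Assembling the three terms then gives the claim.

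The main obstacle is not the weak-convergence content, which is essentially Theorem~\ref{validity1}, but the $L^2$ nature of Mallow's distance: the outer terms multiply a factor that tends to zero only in probability by a squared statistic, so one must secure uniform integrability of $(Z_n/\tau)^2$ and $(Z_n^*/\tau)^2$. Because $\hat T_n$ is quadratic, this forces control of moments of order eight, and the genuinely delicate point is verifying $E^*(Z_n^*)^4=O_P(1)$ \emph{uniformly} over the triangular array of bootstrap processes, whose MA coefficients $\hat c_{k,n}$ and whose innovation law both vary with $n$; this uniform bound, rather than any single distributional limit, is where the real technical effort lies.
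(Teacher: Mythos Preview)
Your argument is essentially correct and reaches the conclusion, but it is organized differently from the paper. The paper does not insert the deterministic scale $\tau$; instead it routes both ratios through the common pivotal limit $\mathcal N(0,1)$, following exactly the template of Corollary~\ref{corollary1}: Slutsky gives $Z_n/\tilde\tau_n\Rightarrow\mathcal N(0,1)$ and $Z_n^*/\tilde\tau_n^*\Rightarrow\mathcal N(0,1)$ in probability, and the matching second-moment convergence is obtained by splitting on the event $\{|\tilde\tau_n^2-\tau^2|<\eps_n\}$ and using the lower bound on the studentizer. Your route via $Z_n/\tau$ and $Z_n^*/\tau$ is arguably cleaner in that the middle term is literally Theorem~\ref{validity1}, but the price is that your Cauchy--Schwarz bound on the outer terms needs $EZ_n^4=O(1)$ and $E^*(Z_n^*)^4=O_P(1)$ explicitly, whereas the paper's truncation argument formally only asks for second moments (though its factorization of $E[Z_n^2\ind_A]$ implicitly relies on a similar higher-moment bound). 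Both approaches ultimately lean on the same ingredients: the eighth-moment condition on the innovations, the uniform control $\sum_k|k\hat c_{k,n}|\le C$, and consistency of $\tilde f_n,\tilde f_n^*,\tilde\kappa_{4,n},\tilde\kappa_{4,n}^*$, the last of which the paper makes concrete by invoking the Fragkeskou--Paparoditis estimator and the Jentsch--Subba~Rao lag-window consistency applied to the pseudo series.

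One small gap in your write-up: you assert that the lower bounds $\tilde f_n,\tilde f_n^*\ge\delta$ ``together with $\tau^2>\delta$'' force $\tilde\tau_n^{-1},\tilde\tau_n^{*-1}$ to be bounded. That does not follow, because the plug-in for $\tau^2$ contains the term $(\tilde\kappa_{4,n}/\hat\sigma_n^4-3)\bigl(\int\tilde f_n\sum_h d(h)e^{ih\lambda}\,d\lambda\bigr)^2$, which can be negative and is not controlled by a lower bound on $\tilde f_n$ alone. The paper handles this by \emph{defining} the studentizer with an explicit floor, $\tilde\tau_n^2=\max\{\eps_n,\ \cdot\ \}$ for a sequence $\eps_n\downarrow0$; you should do the same (or otherwise justify a deterministic lower bound on $\tilde\tau_n^2$) before invoking dominated convergence on $(\tau^{-1}-\tilde\tau_n^{-1})^4$.
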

The assumption $\tau^2>\delta>0$ ensures that $\hat T_n$ converges to a non-degenerate distribution. It is fulfilled if $\kappa_4/\sigma^4>\tilde \delta >1$ or if $f(\cdot) \sum_{h\in \Z} d(h) \exp(ih \cdot)$ is a non-constant function. The estimators $\tilde \tau_n^*$ and $\tilde \tau_n$ are estimators of $\tau$ based on $\tilde f_n^*$ and  $\tilde \kappa_{4,n}^*$ and $\tilde f_n$ and $\tilde \kappa_{4,n}$, respectively. 
\section{Numerical Examples}
\label{sect4}
\subsection{Simulations}
In this section we investigate by means of simulations the finite sample behavior  of   the SDDB and compare its performance with that of two other linear bootstrap methods, the AR-sieve bootstrap and the linear process bootstrap. We also compare all three linear bootstrap methods with the tapered block bootstrap, cf. \cite{paparoditis2001tapered}, and the moving block bootstrap, cf. \cite{kunsch1989}.  Two statistics $T_n$ are considered, the sample mean $ \overline{X}_n$ and the sample autocorrelation $ \widehat{\rho}(2)=\widehat{\gamma}(2)/\widehat{\gamma}(0)$. The  time series used  have been generated from the following three models:
\begin{itemize}
\item[]Model I: \  \  $ X_t = 0.9 X_{t-1} + \eps_t$,

\item[] Model II: \  
$X_t  = 1.34 X_{t-1} -1.88 X_{t-2} + 1.32 X_{t-3} -0.8 X_{t-4}  + \eps_t  + 0.71 \eps_{t-1}+ 0.25 \eps_{t-2}$
\item[]  Model III: \  $X_t=\eps_t + \sum_{k=1}^{10} \binom{n}{k} (-1)^k \eps_{t-k}$
\end{itemize}
In all cases the innovation process $ \{\eps_t\}$ consists of i.i.d. random variables having a $t$-student distribution with 3 degrees of freedom and variance normalized to $1$.
 Model I is tailor made for the AR-sieve bootstrap. 
The SD in Model II has  
a difficult to estimate strong peak  around  frequency $\lambda=1.5$. Furthermore, this model possesses a slowly decaying autocovariance function which oscillates with two frequencies; one for the odd lags and one for the even lags. Model III is an MA process with a unit root; { the SD is zero at frequency zero. Consequently, the sample mean converges to a degenerated distribution making a studentization inappropriate.}
In order to investigate the finite sample 
performance of the different bootstrap methods, empirical coverage probabilities of two-sided confidence intervals obtained for the levels $\alpha=0.2, 0.1$ and $0.05$ are presented. The empirical coverage probabilities are based on $2,000$ realizations of each process and $B=1,000$ bootstrap repetitions. We present the results for the case  $n=128$, while results for the case $n=512$ are given in the Supplementary Material.

For the AR-sieve bootstrap, denoted by \emph{ARS}, the Akaike's information criterion (AIC) is used to select  the AR order $p$, cf. \cite{akaike1969fitting}. The \emph{SDDB} is applied using an AR-pre-whitening, nonparametric estimator of the SD, where the order of the  AR part has been selected  by the AIC and a smoothed periodogram is used with Gaussian kernel and of bandwidth selected by cross-validation; see \citeasnoun{beltrato1987determining}.
Furthermore, for this bootstrap procedure, i.i.d. Gaussian innovations are used. Furthermore, the linear process bootstrap, denoted by \emph{LPB},  has been implemented as in \citeasnoun{mcmurry2010banded}, and the tapered block bootstrap, denoted by \emph{TBB}, has been applied with a block length choice and a tapering window as in \citeasnoun{paparoditis2001tapered}. Due to the strong dependence of some of the models considered, this rule for choosing the block length leads to unfeasible results especially for small sample sizes. For instance, even for $n=512$ this rule delivers for Model II block lengths of around $400$. For this reason, we  also consider the moving block bootstrap with nonrandom block length given by $l=n^{1/3}$. This procedure is denoted by \emph{BB}.

 As mentioned in Section \ref{sec.bootval}, a better finite sample performance may be obtained by using bootstrap approximations of studentized statistics. Thus, we consider for the sample mean the statistic $\Xn(2 \pi \fn(0))^{-1/2}$, where $\fn$ is the same SD estimator as the one used for \emph{SDDB}. The sample autocorrelation is studentized as well, where the variance is estimated by Bartlett's formula, \citet[Theorem 7.2.1]{BrockwellDavis1991}, based on the autocorrelation function corresponding to the estimated SD $\fn$. Finally, a standard normal distribution is considered as a further competitor for the studentized statistics and is denoted in the following by \emph{ND}. For non-studentized statistics a normal distribution is used with the variance estimated by using the \emph{SDDB} procedure. Studentization brings clear improvements for all models and all statistics considered. Hence, the focus is on the studentized case and the non-studentized tables can be found in the Supplementary Material.

The coverage probabilities for the studentized sample mean are displayed in Table~\ref{table.mean.ST}. 
As it is seen from Table \ref{table.mean.ST}, none of the competitors outperforms the \emph{SDDB} procedure. In fact, in many cases the \emph{SDDB} performs best. Finally, and for Model III it seems that only the \emph{SDDB} procedure gives reasonable estimates. Notice that the SD of Model III is not bounded away from zero, that is, it is not clear whether the \emph{LPB} or the \emph{ARS} are valid in this case. The coverage probabilities for the studentized sample autocorrelation are displayed in Table~\ref{table.rho.ST}. 
For this statistic over all, the most accurate coverage probabilities 
are those obtained by using the \emph{ARS} and the \emph{SDDB} procedures.

Notice that block bootstrap methods have their strength in their general applicability, i.e., they are applicable not only to linear processes, like those considered in the simulation study, and to a broad class of statistics. Consequently, it is not surprising that these methods do not perform best for the linear processes considered. 

Summarizing our numerical findings, it seems that the SDDB performs very good in all model situations and for both statistics considered. In combination with a flexible SD estimator, like for instance the pre-whitening based estimator used in the simulations, the SDDB seems to be a valuable tool for bootstrapping time series. 

\begin{table}
\caption{Coverage probabilities (in percent) for the mean using the studentized statistic of $\Xn(2 \pi \fn)^{-1/2}$ and for a sample size $n=128$ \label{table.mean.ST}}
\begin{tabular}{|l|ccc|ccc|ccc|}
\hline
           & \multicolumn{3}{c|}{Model I}                                              & \multicolumn{3}{c|}{Model II}                                             & \multicolumn{3}{c|}{Model III}                                            \\
$(1-\alpha)100$ & 80.0       & 90.0      & 95.0      & 80.0       & 90.0       & 95.0      & 80.0       & 90.0       & 95.0       \\
\hline
SDDB       & 78.0                   & 87.1                   & 92.2                    & 78.1                   & 88.7                   & 94.3                    & 80.2                   & 90.0                   & 94.8                    \\
LPB        & 76.0                   & 85.5                   & 90.8                    & 78.2                   & 88.1                   & 92.0                    & 35.1                   & 48.1                   & 61.8                    \\
TBB        & 66.9                   & 77.0                   & 83.0                    & 39.4                   & 46.9                   & 52.2                    & 49.1                   & 56.1                   & 62.7                    \\
ND         & 67.8                   & 78.2                   & 84.6                    & 64.1                   & 76.4                   & 84.2                    & 24.2                   & 32.4                   & 40.0                    \\
ARS        & 76.6                   & 85.9                   & 91.1                    & 74.2                   & 85.5                   & 92.3                    & 39.5                   & 51.8                   & 62.4                    \\
BB         & 28.4                   & 41.0                   & 49.6                    & 30.4                   & 41.2                   & 50.8                    & 34.7                   & 41.6                   & 47.3                    \\ \hline
\end{tabular}

\end{table}

\begin{table}
\caption{Coverage probabilities (in percent) for the lag $2$ autocorrelation using the studentized empirical autocorrelation at lag $2$ and for a sample size $n=128$ \label {table.rho.ST}}
\begin{tabular}{|l|ccc|ccc|ccc|}
\hline
           & \multicolumn{3}{c|}{Model I} & \multicolumn{3}{c|}{Model II} & \multicolumn{3}{c|}{Model III} \\
$(1-\alpha)100$ & 80.0       & 90.0      & 95.0      & 80.0       & 90.0       & 95.0      & 80.0       & 90.0       & 95.0       \\ \hline
SDDB       & 82.5     & 91.5    & 96.0    & 79.4     & 89.3     & 93.5    & 81.0     & 90.3     & 95.3     \\
LPB        & 85.5     & 94.2    & 97.2    & 92.0     & 95.9     & 97.2    & 82.8     & 91.9     & 96.7     \\
TBB        & 76.3     & 83.9    & 87.7    & 20.6     & 25.1     & 27.6    & 65.4     & 72.6     & 77.2     \\
ND         & 75.8     & 87.3    & 92.3    & 73.4     & 84.4     & 90.0    & 79.4     & 88.7     & 94.0     \\
ARS        & 81.8     & 91.3    & 95.9    & 80.8     & 88.9     & 93.2    & 82.0     & 91.4     & 95.7     \\
BB         & 32.6     & 45.0    & 57.0    & 21.4     & 31.2     & 42.8    & 27.9     & 40.0     & 52.2     \\ \hline
\end{tabular}
\end{table}

\subsection{A Real-Life Data Example}
We consider the time series of annual measurements of the water level, in feet, of Lake Huron; cf. Series A in the Appendix of \cite{BrockwellDavis1991} or in  the \emph{R}-package \emph{datasets::LakeHuron}, \cite{R}. Figure \ref{spec.Lake.Huron} shows the results of the following five SD estimators applied to this time series: An AR-pre-whitened, nonparametric estimator of the spectral  density, denoted by \emph{Pre-Whitening}, where the order of the  AR part has been selected  by AIC and the truncation lag  by cross-validation
; a nonparametric SD estimator using cepstrum thresholding, denoted by \emph{Cepstrum}, see \cite{Stoica2006}; 
a lag window estimator with a  trapezoid kernel and the truncation rule as in \cite{politis2003adaptive}, denoted by \emph{Trapezoid} and an AR parametric approach, where the order of the  AR part has been selected  by AIC. Although, all estimators have a more or less similar overall behavior, they are different with the AR based approaches possessing a stronger peak at frequency zero than the other. 
We next discuss the impact of these different estimators on the SDDB.  As mentioned in Section \ref{sec.boot}, the SDDB can be either used with the MA or with the AR representation of the process corresponding to the SD estimator applied. Table \ref{table.coef} shows for each estimator the obtained MA-coefficients and AR-coefficients, respectively. As it is seen, depending on the SD estimator used, the MA or the AR representation describes the structure of the process more effectively, i.e., less non-zero coefficients are needed. Clearly, the differences between the SD estimators used manifest themselves in the MA or the AR coefficients obtained. 
 Notice that the oscillation of the \emph{Trapezoid} SD estimator can be also seen in the behavior of the MA coefficients and that this estimator implicitly fits an MA model to the time series at hand. 

{As this example demonstrates, the broad literature to SD estimation offers a variety of techniques to estimate this function.
Therefore, the model used to generate the bootstrap pseudo observations depends on the SD estimator used and which is preferred by the practitioner. The resulting MA or AR representation can then applied to generate pseudo observations in order to bootstrap some statistic of interest.}

\begin{figure}[h]
\includegraphics[width=14cm]{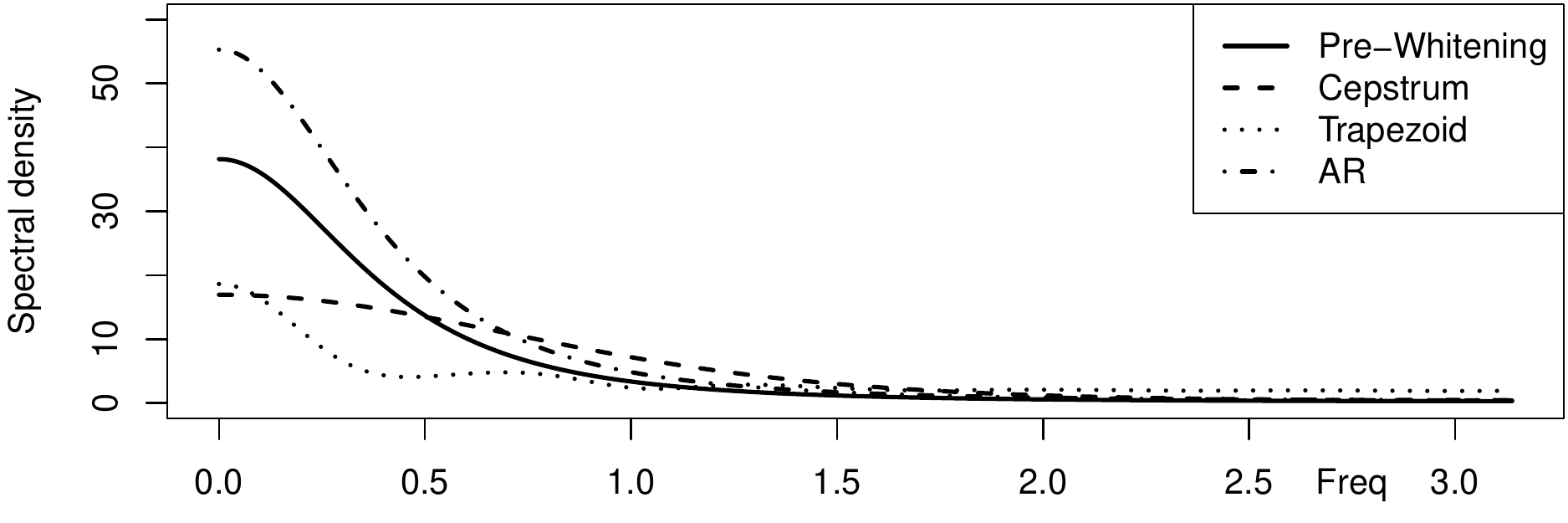}
\caption{Different SD estimates for the Lake Huron data} \label{spec.Lake.Huron}
\end{figure}

\begin{table}
\caption{The MA ($c_{k,n}$) and AR ($b_{k,n}$) coefficients $, k=2,\dots,11$ for the different SD estimates shown in Figure \ref{spec.Lake.Huron} \label{table.coef}}
    \begin{tabular}{l|cccccccccc}
    \hline
    $c_{k,n}$ & 2    & 3     & 4     & 5     & 6    & 7     & 8     & 9    & 10   & 11    \\ \hline
    AR             & 1.05 & 0.84  & 0.61  & 0.42  & 0.28 & 0.18  & 0.12  & 0.07 & 0.05 & 0.03  \\
    Pre-Whitening  & 1.07 & 0.85  & 0.61  & 0.42  & 0.28 & 0.18  & 0.12  & 0.07 & 0.05 & 0.03  \\
    Cepstrum       & 0.93 & 0.43  & 0.14  & 0.03  & 0.01 & 0     & 0     & 0    & 0    & 0     \\
    Trapezoid      & 0.33 & 0.23  & 0.16  & 0.12  & 0.11 & 0.09  & 0.08  & 0.09 & 0.12 & 0.09  \\
\hline\hline
    $b_{k,n}$ & 2    & 3     & 4     & 5     & 6    & 7     & 8     & 9    & 10   & 11    \\ \hline
    AR             & 1.05 & -0.27 & 0     & 0     & 0    & 0     & 0     & 0    & 0    & 0     \\
    Pre-Whitening  & 1.07 & -0.29 & 0.01  & 0     & 0    & 0     & 0     & 0    & 0    & 0     \\
    Cepstrum       & 0.93 & -0.43 & 0.14  & -0.03 & 0.01 & 0     & 0     & 0    & 0    & 0     \\
    Trapezoid      & 0.33 & 0.12  & 0.05  & 0.03  & 0.03 & 0.02  & 0.02  & 0.03 & 0.05 & 0.01  \\
    \end{tabular}
\end{table}
%
%
%
%
%
%

\section{Conclusions}
\label{conclusions}
In this paper a spectral  density factorization has been used to obtain consistent  estimates of the entire sequence of moving average coefficients  of the Wold representation of a stationary nondeterministic  process.  A bootstrap procedure then has been proposed which 
uses the estimated sequence of moving average coefficients together with  a sequence of pseudo innovations to generate new pseudo time series. Apart for the choice of the pseudo innovations, this bootstrap procedure is completely driven and controlled by the spectral density estimator used. For i.i.d. pseudo innovations the new bootstrap method generalizes existing linear bootstrap methods, like for instance, the AR-sieve bootstrap. The latter is a special case of the spectral density-driven bootstrap, which is obtained if an autoregressive spectral density estimator is used. We established asymptotic validity of the proposed bootstrap method driven by i.i.d. pseudo innovations for  linear processes and for interesting classes of statistics. The good finite sample behavior of the new bootstrap method has been demonstrated by means of simulations.

\section{Proofs}
\label{sec:proofs}

\begin{proof}[Proof of Theorem \ref{consistence}]
a) Since $\fn$ is a uniformly consistent estimator, it follows that some function $g$, with $g(n) \to \infty$ as $n \to \infty$, exists such that
$
\sup_{\lambda \in [-\pi,\pi]} | \fn (\lambda) - f (\lambda)| = o_P(1)= O_P\left({g(n)}^{-1}\right).
$
Consequently, we have that for all $\eps>0$, there exists a $\Omega_0 \in \mathcal{A}$ with $P(\Omega_0)\geq 1-\eps$ and a $n_0 \in \N$, such that for all $\omega \in \Omega_0$ a constant $C>1$  exits, such that for all $n\geq n_0$ it holds true that  
$\sup_{\lambda \in [-\pi,\pi]} | \fn (\lambda) - f (\lambda)|  \leq {C}{g(n)}^{-1}.$
Since $\log f$ and $\log \fn$ are integrable, and the set $\{\lambda \in [-\pi,\pi) : f(\lambda)=0 \text{ or } \fn(\lambda)=0 \}=:B_0^{\mathrm C}$ is a null set, we have
$\hat a_{k,n}  =
  a_k +\!\!\int_{B_0}\left[\log ( \fn\lambda) - \log f (\lambda)\right] \exp(-ik\lambda)   \frac{\dlambdaB}{2\pi}.
 $
We get further \\
$
 2\pi\sup_{k \in \N} |\hat a_{k,n}-a_k| = \sup_{k \in \N} \left|  \int_{B_0}  \left(\log ( \fn (\lambda)) - \log f (\lambda)\right) \exp(-ik\lambda)  \dlambdaB\right| \\
 \leq 
   \int_{B_0}\!\!\! \ind_{\{\fn(\lambda) > f (\lambda)\}} (\lambda) \log \left({\fn (\lambda)}/{f (\lambda) }\right)d \lambda 
  +\int_{B_0} \!\!\! \ind_{\{f(\lambda) > \fn (\lambda)\}} (\lambda)  \log \left({f (\lambda)}/{ \fn (\lambda) } \right)  d \lambda.
$
Let $\omega \in \Omega_0$ and consider the case $\fn(\lambda)> f (\lambda)>0$. Then we have $\fn(\lambda) \leq {C}{g(n)}^{-1} + f(\lambda)$. Consequently, 
$
 \int_{B_0} \ind_{\{\fn(\lambda) > f (\lambda)>0\}} (\lambda)  \log \left(\!{\fn (\lambda)}/{f (\lambda) } \!\right)\! d \lambda \leq  
\int_{B_0 \cup \{\fn > f\}}  \log(f(\lambda)+C/g(n))-\log(f\lambda) d \lambda.
$
 Assume $g(n)>1$. Then it holds true for all $n \in \N$ and all $\lambda \in [-\pi,\pi]$ that 
$
|\log \left(C(g(n) f (\lambda))^{-1} +1 \right)| \leq 
|\log ( C + f (\lambda)) | +   |\log ( f (\lambda)) |.
$
Since $\log f$ is integrable, $\log (C+ f (\cdot))$ is integrable as well, and consequently, the dominated convergence theorem can be applied. We get $\lim_{ n \to \infty}\int_{B_0 \cup \{\fn > f\}}  \log(f(\lambda)+C/g(n))-\log f(\lambda) d \lambda=0$. 
Analogously, it can be shown that $\lim_{n \to \infty} \int_{B_0 \cup \{\fn < f\}} \log \left({f (\lambda)}/{ \fn (\lambda) } \right)d \lambda=0$. 
Thus, we have for all $\omega \in \Omega_0$ that $\sup_{k \in \N} |{1}/({2 \pi})  \int_{-\pi}^\pi \log ( \fn (\lambda)) - \log f (\lambda) \exp(-ik\lambda) d \lambda|\to 0$ as $n \to \infty$. This proves assertion a). 
For b) and  c) fix a $k \in \N$ and observe that $\hat c_{k,n}$ is a continuous transformation of a finite number of $\hat a_{k,n}$'s. Thus, $\sup_{k \in \N} |\hat a_{k,n}-a_k | = o_P(1)$ ensures, that $\hat c_{k,n} \overset{P}{\to} c_k$ as $n \to \infty$. The same arguments apply to $\hat b_{k,n}$.
\end{proof}

Before proving Theorem \ref{maseries}, we notice the following useful lemma with proof in the Supplementary Material.
\begin{lemma}\label{assumptions}
Let the condition of part b) of Theorem \ref{maseries} be satisfied. 
Then it holds true that
a) $
\sup_{\lambda \in [-\pi,\pi]} | {f}^{1/2}(\lambda) - {\fn}^{1/2}(\lambda)| = o_P(1)$, b) $
\sup_{\lambda \in [-\pi,\pi]} | \log \left(f(\lambda)\right)-\log(\fn(\lambda))| = o_P(1)$, and c) $
\sup_{\lambda \in [-\pi,\pi]}  \left|\frac{\dx}{\dx \lambda} \log (f(\lambda))-\frac{\dx}{\dx \lambda} \log\left(\fn(\lambda)\right)\right| = o_P(1) $
\end{lemma}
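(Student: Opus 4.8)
The plan is to exploit the fact that both $f$ and $\fn$ are bounded away from zero on $[-\pi,\pi]$, so that the maps $x\mapsto\sqrt{x}$ and $x\mapsto\log x$ are Lipschitz on the relevant range; this reduces parts a) and b) to the uniform consistency of Assumption~1, while part c), after writing $\tfrac{\dx}{\dx\lambda}\log f=f'/f$, is reduced to Assumptions~1 and 2(ii). Throughout I denote differentiation with respect to $\lambda$ by a prime, and all suprema are over $\lambda\in[-\pi,\pi]$, using the evenness of the spectral density to pass from the $[0,\pi]$ statements in the assumptions.

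First I would establish a common deterministic lower bound. Since $f$ has a continuous first derivative it is continuous, and being strictly positive on the compact interval $[-\pi,\pi]$ it attains a positive minimum $c_f:=\min_\lambda f(\lambda)>0$; by Assumption~2(i), $\fn(\lambda)\geq C_1>0$ for all $n$. Setting $c:=\min(c_f,C_1)>0$, we have $f(\lambda)\geq c$ and $\fn(\lambda)\geq c$ for all $\lambda$ and all $n$. For part a), I would then use the elementary identity $|\sqrt{x}-\sqrt{y}|=|x-y|/(\sqrt{x}+\sqrt{y})\leq |x-y|/(2\sqrt{c})$, valid for $x,y\geq c$; applied pointwise and taking the supremum this gives
\[
\sup_\lambda | f^{1/2}(\lambda)-\fn^{1/2}(\lambda)|\leq \frac{1}{2\sqrt{c}}\,\sup_\lambda | f(\lambda)-\fn(\lambda)|,
\]
which is $o_P(1)$ by Assumption~1. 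For part b), the same idea with $\log$ in place of $\sqrt{\cdot}$: by the mean value theorem $|\log x-\log y|\leq |x-y|/c$ for $x,y\geq c$, whence $\sup_\lambda|\log f-\log\fn|\leq c^{-1}\sup_\lambda| f-\fn|=o_P(1)$.

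Part c) is the only one requiring the derivative condition. I would split
\[
\frac{f'}{f}-\frac{\fn'}{\fn}=\frac{f'-\fn'}{f}+\fn'\,\frac{\fn-f}{f\,\fn}
\]
and bound each term uniformly. The first term is at most $c^{-1}\sup_\lambda| f'-\fn'|=o_P(1)$ by Assumption~2(ii). For the second, note $\sup_\lambda|\fn'|\leq \sup_\lambda| f'|+\sup_\lambda|\fn'-f'|$; since $f'$ is continuous on a compact interval it is bounded, and the second summand is $o_P(1)$, so $\sup_\lambda|\fn'|=O_P(1)$. Together with $f,\fn\geq c$ this bounds the second term by $c^{-2}\,O_P(1)\,\sup_\lambda| f-\fn|=o_P(1)$, and summing the two contributions yields the claim.

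The only point requiring care — and the closest thing to an obstacle — is securing a genuine uniform lower bound on $f$ (strict positivity alone would not suffice without the continuity/compactness argument) and, in part c), treating $\sup_\lambda|\fn'|$ as an $O_P(1)$ quantity rather than a fixed constant, so that the standard calculus of stochastic orders (a product of an $O_P(1)$ and an $o_P(1)$ term is $o_P(1)$) applies. Everything else is a routine Lipschitz estimate.
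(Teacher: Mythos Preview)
Your argument is correct and is precisely the standard route: use the uniform lower bound $c=\min(\min_\lambda f(\lambda),C_1)>0$ to make $\sqrt{\cdot}$ and $\log$ Lipschitz on $[c,\infty)$, and for part~c) decompose $f'/f-\fn'/\fn$ and control $\sup_\lambda|\fn'|$ as $O_P(1)$ via Assumption~2(ii). The paper defers its proof to the Supplementary Material, but given the elementary nature of the statement this is essentially the only natural approach and is certainly what is intended there.
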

\begin{proof}[Proof of Theorem \ref{maseries}]
Since $(a_k)$ and $(\hat a_{k,n})$ are the Fourier coefficients of $\log f$ and $\log \fn$ respectively, we have 
$
\intpi | \log f(\lambda) - \log \fn (\lambda) |^2 \dlambdaB = \intpi | \sum_{k=-\infty}^\infty (a_k -\hat a_{k,n}) \exp( i k \lambda) |^2 \dlambdaB,
$
and consequently (\ref{L2ak}) follows from Parseval's identity. (\ref{L2akk}) follows by the same argument. Using Jensen's inequality and since $\sum_{k=1}^\infty k^{-2}=\pi^2/6$, we get 
$
\sum_{k=1}^\infty | a_k-\hat a_{k,n}|  
\leq \sqrt{\pi/24}\left(\int_0^{2 \pi} |\frac{\dx}{\dx \lambda}  \log f (\lambda) -\frac{\dx}{\dx \lambda}  \log \fn(\lambda)|^2  \dlambdaB \right)^{1/2}\!\!\!.$ 
Lemma \ref{assumptions} implies then, that the above bound converges to zero in probability as $n\to \infty$. 
Let $\sgn(k)=\ind_{\{k>0\}}-\ind_{\{k<0\}}$. Since Lemma A.2 
 (or see \citet{pourahamdi1984}) ensures that for all $\lambda \in [0,2\pi]$ it holds true that ${\sigma}/{\sqrt{2\pi}}\sum_{k=0}^\infty c_k \exp( i k \lambda) = \exp( a_0/2 + \sum_{k=1}^\infty a_k \exp(i k \lambda))$ and similarly for $\{\hat c_{k,n}\}$ with $a_k$ replaced by $\hat a_{k,n}$, we get by Parseval's identity, the fact that $\cos(x) \geq 1-0.5 x^2$ for all $x\in \R$ and $\intpi \sin(k \lambda) \sin(l \lambda) \dlambdaB=\pi \ind_{\{k=l\}}$ for all $k,l \in \N$, that
\begin{align*}
&\sum_{k=0}^\infty |\sigma c_k-\sigmah\hat c_{k,n}|^2 = \intpi \left| \sum_{k=0}^\infty (\sigma c_k-\sigmah \hat c_{k,n}) \exp(i k \lambda)\right|^2 \dlambda \\
&=\intpi \left| \exp\left[1/2 \sum_{k=-\infty}^\infty a_k \exp(ik\lambda)+ 1/2 \sum_{k=-\infty}^\infty \sgn(k) a_k \exp(ik\lambda)\right] \right.  \\ 
& \left. \qquad \, \quad -\exp\left[1/2\sum_{k=-\infty}^\infty\hat a_{k,n} \exp(ik\lambda)+1/2 \sum_{k=-\infty}^\infty \sgn(k)\hat a_{k,n} \exp(ik\lambda)\right]\right|^2 \dlambdaB \\ 
&=\intpi \left(f(\lambda) +\fn(\lambda) -2 f^{1/2}(\lambda) \fn^{1/2}(\lambda) \cos\left[\sum_{k=1}^\infty (a_k-\hat a_{k,n}) \sin(k \lambda)\right]\right) \dlambdaB \\
&\leq \intpi \!\!\! \left(f^{1/2}(\lambda)-\fn^{1/2}(\lambda)\right)^2 \!\!\!\dlambdaB +  \!  \! \! \sup_{\lambda \in [0,2\pi]}  \!\! \! (f(\lambda) \fn(\lambda))^{1/2} \intpi \!\! (\log f (\lambda) - \log \fn (\lambda))^2 \dlambdaB= \text{o}_P(1)
\end{align*}

where the last equation follows by $(\ref{L2ak})$ and Assumption 2. 
Assertion  (\ref{L2ck}) follows since $\sigmah  \stackrel{P}{\rightarrow} \sigma$ and
$
\sum_{k=0}^\infty |c_k-\ckn|^2 \leq {2}/{\sigma} \sum_{k=0}^\infty | \sigma c_k- \sigmah \ckn|^2+ {2}/{\sigma} \sum_{k=0}^\infty |\ckn|^2 |\sigmah-\sigma|^2=\text{o}_P(1).
$ By Jensen's inequality, we have 
\begin{align*}
&\sum_{k=1}^\infty |\sigma c_k-\sigmah \hat c_{k,n}|  \leq 
\frac{\pi}{\sqrt{6}} \left( \intpi \left| \exp\left[ \frac{\hat a_{0,n}}{2} + \sum_{k=1}^\infty (\hat a_{k,n}) \exp(ik\lambda)\right] \sum_{k=1}^\infty (\hat a_{k,n}-a_k) (k i) \exp(ik \lambda) -\right.  \right. \\
&\left. \left.  \sum_{k=1}^\infty (k i) a_k \exp( i k \lambda)  \left(\!\! \exp\!\!\left[\frac{a_0}{2}+\sum_{k=1}^\infty a_k \exp(ik \lambda) \right] - \exp\left[\frac{\hat a_{0,n}}{2}+ \sum_{k=1}^\infty (\hat a_{k,n}) \exp(ik \lambda)\right]\right)\right|^2 \dlambdaB \right)^{1/2}\!\!\!\!.
\end{align*}
The term on the right hand side of the last equation can be bounded by \\
$
\frac{\pi}{\sqrt{3}} \left(\intpi \left[ \fn(\lambda) \left| \sum_{k=1}^\infty (\hat a_{k,n}-a_k) (k i) \exp(ik \lambda) \right|^2 + \left| \sum_{k=1}^\infty (k i) a_k \exp( i k \lambda) \right|^2 \right. \right. \\
\left.\left. \times   \left|  \exp\left[{a_0}/{2}+\sum_{k=1}^\infty a_k \exp(ik \lambda) \right] - \exp\left[{a_{0,n}}/{2}+ \sum_{k=1}^\infty (\hat a_{k,n}) \exp(ik \lambda)\right]\right|^2  \right]\dlambdaB   
\right)^{1/2}\!\!\! .$

Furthermore, the second part of the last term can be bounded analogously as in the case for $(\ref{L2ck})$  and because of Assumption 2, we get %
\begin{align*}
&\sum_{k=1}^\infty |\sigma c_k-\sigmah \hat c_{k,n}|  \leq 
 \frac{C \pi}{\sqrt{3}} \sup_{\lambda \in [0,2\pi]} \left| \sum_{k=1}^\infty a_k k \exp(ik\lambda) \right| \left( \frac{1}{4} \intpi  \left| \frac{\dx}{\dx \lambda}{\log f (\lambda)} - \frac{\dx}{\dx \lambda} \log {\fn}(\lambda) \right|^2 \dlambdaB \right.\\
& \qquad\left. +  \sum_{k=1}^\infty (\hat a_{k,n} -a_k) ^2k^2  \intpi \left|{f ^{1/2}(\lambda)} - {\fn}^{1/2}(\lambda) \right|^2 \dlambdaB \right)^{1/2} \\
&= \left(\mathcal{O}_P(1) \intpi \left| \frac{\dx}{\dx \lambda} \log f(\lambda)-  \frac{\dx}{\dx \lambda}\log \fn(\lambda) \right|^2 \dlambdaB + \mathcal{O}_P(1) \intpi \left|{f ^{1/2}(\lambda)} - {\fn}^{1/2}(\lambda) \right|^2 \dlambdaB\right)^{1/2}\!\!\!\!.
\end{align*}
The convergence to zero in probability, as $n \to \infty$, of the term on the right hand side of the last equality follows from $(\ref{L2akk})$  and the fact that $\sigmah  \stackrel{P}{\rightarrow} \sigma$. 
\end{proof}

\begin{proof}[Proof of Theorem \ref{validity1}]
For simplicity, the proof is stated for centered $X_t^*$'s and a single $\hat T_{n,p}$ and in order to simplify notation the subscript $p$ is in the following omitted. Since $g$ is a smooth function, the delta-method can be applied to get the asymptotic normality of $\hat T_n^*$. Consider the statistic 
$
T_{n}^*
= {1}/{n} \sumtn \sum_{h=1-t}^{n-t} d(h) X_t^* X_{t+h}^*.
$
We firstly show that, as  $n \to \infty$,
$
\sqrt{n} ( T_{n}^{*M} - E^* T_{n}^{*M}) \to \mathcal{N} (0,\sigma_M^2)$, in probability,  where $T_{n}^{*M}= \frac{1}{n} \sumtn \sum_{h=1-t}^{M \land (n-t)} d(h) X_t^* X_{t+h}^*.$ For this, we use a central limit theorem for triangular arrays of weakly dependent random variables established by \cite{CLT1}. Let
$
\sqrt{n} ( T_{n}^{*M} - E^* T_{n}^{*M}) = \sumtn \frac{1}{\sqrt n }\sum_{h=1-t}^{M \land (n-t)} d(h) (\Xs{t} \Xs{t+h} - \gammah(h)) = \sumtn Z_{t,n}^*,
$
with an obvious notation for $Z_{t,n}^*$ and $\gammah(h)=1/(2\pi)\int_{-\pi}^\pi \fn(\lambda) \exp(-ih\lambda) \dlambdaB$. For the mean of $Z_{t,n}^*$, we have
$
E^* Z_{t,n}^* 
 = 0 \, \text{ for all } t \in \Z.
$
Furthermore, since $\{\Xs t, t \in \Z\}$ is a linear process, its strictly stationarity can be used to show that 
$
\sumtn \Es (Z_{t,n}^*)^2
\leq 
(\Es (\Xs{1})^4 + \gammah(0)^2) (\sum_{h \in Z} | d(h)|)^2 \leq C< \infty,$
where $C$ is independent of $n$, since $\hat \kappa_{4,n}$ and $\gammah$ are uniformly bounded and so is $\Es (\Xs{1})^4$. \\
Consider the weak dependence structure of $Z_{t,n}^*$. For this,  let $u \in \N$ and consider time points $1 < s_1< \dots< s_u<s_u+r=t_1\leq t_2 \leq n$, a square integrable and measurable function $f : \R^u \to \R$ and a bounded and measurable function $\tilde f : \R^u \to \R$. Without loss of generality, we assume, that $r > M$. Then, we have
$
|\sqrt n \cov(f(Z_{s_1,n}^*  ,\dots,Z_{s_u,n}^*),Z^*_{t_1,n})| = \left|\cov\!\left(f(Z_{s_1,n}^*,\dots,Z_{s_u,n}^*), \sum_{h=1- s_u-r}^{M \land ( n - (s_u+r))} d(h) (\Xs{s_u+r} \Xs{s_u+r+h})\right)\right| \\
=\left|\cov\left(f(Z_{s_1,n}^*,\dots,Z_{s_u,n}^*), \sum_{h=1- s_u-r}^{M \land ( n - (s_u+r))} d(h) \sum_{j=0}^\infty \hat c_{j,n} \sum_{l=0}^\infty \hat c_{l,n} \eps^*_{s_u+r-j} \eps_{s_u+r+h-l}^*\right)\right|.
$
Since $\{\Xs t\}$ is an one-sided linear process, we have $f(Z_{s_1,n}^*,\dots,Z_{s_u,n}^*)=$\linebreak$f(g(\eps_{s_u+M}^*,\eps_{s_u+M-1}^*,\dots))$ for some measurable function $g$. Consequently, by the independence of the $\eps^*_t$'s and applying Cauchy-Schwarz's inequality, it follows for the last expression above that it can be bounded by
\begin{align*}
  &\ n^{-1/2}\!\!\!\!\!\!\!\!\!\!\! \sum_{h=1- s_u-r}^{M \land ( n - (s_u+r))}\! \!\!\! \!\!\! \!\!|d(h)|\! \!\! \!\! \sum_{j=r-M}^\infty \!\!|\hat c_{j,n}|\!\! \!\!\!\!\!\sum_{l=r+h-M}^\infty \!\!\!\!\!\!  \! \!|\hat c_{l,n}| (E^* f^2(Z_{s_1,n}^* ,\dots,Z_{s_u,n}^*))^{1/2} \Big[E^* ( \eps^*_{s_u+r-j} \eps^*_{s_u+r+h-l})^2\Big]^{1/2} \\
\leq& (E^* f^2(Z_{s_1,n}^* ,\dots,Z_{s_u,n}^*))^{1/2}  \frac{1}{\sqrt n } \sum_{h \in \Z} |d(h)|  \sum_{l=0}^\infty |\hat c_{l,n}| \max(\hat \kappa_{4,n},1) \sum_{j=r-M}^\infty |\hat c_{j,n}| \\
  \leq& (E^* f^2(Z_{s_1,n}^* ,\dots,Z_{s_u,n}^*))^{1/2}  \frac{1}{\sqrt n } C  \sum_{j=r-M}^\infty |\hat c_{j,n}| ,%
\end{align*}
where $C< \infty$ is independent of $n$ since $\hat \kappa_{4,n}$ and $ \sum_{j=0}^\infty  \hat c_{j,n}$ are uniformly bounded. We have
\begin{align*}
&|\cov( \tilde f(Z_{s_1,n}^*  ,\dots,Z_{s_u,n}^*),Z^*_{t_1,n} Z^*_{t_2,n})| =\\ 
&= \left|  \sum_{h=1- s_u-r}^{M \land ( n - (s_u+r))} \!\!\!\!\!\!d(h) \!\!\!\sum_{j=r-M}^\infty\!\!\! \hat c_{j,n}\!\!\! \sum_{l=r+h-M}^\infty \!\!\!\hat c_{l,n} \cov(\tilde f(Z_{s_1,n}^*,\dots,Z_{s_u,n}^*),\frac{1}{\sqrt n} \eps^*_{s_u+r-j} \eps^*_{s_u+r+h-l} Z^*_{t_2,n}) \right| \\
& \leq \sum_{h=1- s_u-r}^{M \land ( n - (s_u+r))}\!\!\!\!\!\! |d(h)|\!\!\! \sum_{j=r-M}^\infty\!\!\! |\hat c_{j,n}| \!\!\!\sum_{l=r+h-M}^\infty |\hat c_{l,n} |  \| \tilde f \|_\infty  \frac{2}{n} E^*  \left[\left(\eps^*_{s_u+r-j} \eps^*_{s_u+r+h-l}\right)^2 + (Z^*_{t_2,n})^2\right]  \\
& \leq  \| \tilde f \|_\infty\frac{2}{n} (  \max(\hat \kappa_{4,n},\sigmah^2)  + E^* (Z^*_{t_2,n})^2)  \sum_{h\in \Z} |d(h)|  \sum_{l=0}^\infty |\hat c_{l,n} | \! \!  \!\sum_{j=r-M}^\infty \!\! \!|\hat c_{j,n}| \leq  \| \tilde f \|_\infty \frac{C}{n} \!\!\! \sum_{j=r-M}^\infty \!\!\!  |\hat c_{j,n}|.
\end{align*}
Consequently, the sequence $\{Z_{t,n}^*\}$ fulfills the weakly dependence condition of \citeasnoun{CLT1}, if  $2 C\sum_{j=r-M}^\infty |\hat  c_{j,n}| \leq \theta_r$  for some summable $(\theta_r)_{r \in \N}$. Since \linebreak$\sup_{\lambda \in (-\pi,\pi]} (\frac{\dx}{\dx \lambda})^3 \log \fn (\lambda) \leq C $ holds independently of $n$, it follows similarly to Lemma A.3  that $\sup_{j,n} | \hat c_{j,n} j^3| \leq C$. Hence, 
$2 C\sum_{j=r-M}^\infty |\hat c_{j,n}| \leq \sum_{j=r-M}^\infty {C}j^{-3}=: \theta_r \text{ for all }r > M \text{ and for some } C>0.$
If $r \leq M$ we set $\theta_r:=C$. Then it holds $\sum_{r=0}^\infty \theta_r = C (M +1 + \sum_{r=M+1}^\infty \sum_{j=r-M}^\infty j^{-3}) = C(M +1 + \sum_{j=1}^\infty j^{-2})< \infty.$
Regarding the variance of $T_{n}^{*M}$, consider firstly
$T_{n}^{*M}
= {1}/{\sqrt n} \sum_{h=-n+1}^M \sum_{t=1 \lor (1-h)}^{n \land (n-h)} d(h) \Xs t \Xs{t+h}.$
Using the linear process structure of $\{\Xs t\}$ and additionally, let $c_{j}=0$ for all $j < 0$,  we get
\begin{align*}
\var(&T_{n}^{*M})=\! \! \! \!\!\!\!\! \sum_{h_1,h_2=-n+1}^M \! \!\!\! 1/n \!\!\!\! \sum_{t=1 \lor (1-h_1)}^{n \land (n-h_1)}\!\!\!\!\!d(h_1)\!\!\!\! \sum_{s=1 \lor (1-h_2)}^{n \land (n-h_2)}\! \!\!\!\!\! \!\! d(h_2) \left( E^*(\Xs t \Xs{t+h_1} \Xs s \Xs{s+h_2}) -\gammah(h_1)\gammah(h_2) \right) \\
&=\sum_{h_1,h_2=-n+1}^M  \sum_{k=-(n-1)}^{n-1} d(h_1) d(h_2) \Big( \sum_{j=0}^\infty \hat  c_{j,n} \hat c_{j+h_1,n} \hat c_{j+k,n}\hat  c_{j+k+h_2,n} (\hat \kappa_{4,n}-3)  \\
& \qquad +\gammah(k) \gammah(k+h_1-h_2)+\gammah(k+h_2)\gammah(k-h_1)\Big)  \frac{\max(0,n-(|k|+|h_1-h_2|))}{n}. 
\end{align*}
Since $\hat \kappa_{4,n}$ is a consistent estimator of $\kappa_4$ and $\sum_{j=0}^\infty | c_j - \hat  c_{j,n}| = o_P(1)$  from which it follows that $\sum_{k \in \Z} \gammah(k) \gammah(k+x)= \sum_{k\in \Z} \gamma(k) \gamma(k+x) + o_P(1)$, we have that the last term is equal to
\begin{align*}
&=\sum_{h_1=-n+1}^M \sum_{h_2=-n+1}^M  \sum_{k=-(n-1)}^{n-1} d(h_1) d(h_2) \Big( \sum_{j=0}^\infty c_{j} c_{j+h_1} c_{j+k} c_{j+k+h_2} ( \kappa_{4}-3)  \\
& + \gamma(k)\gamma(k+h_1-h_2)+\gamma(k+h_2)\gamma(k-h_1)\Big)  \frac{\max(0,n-(|k|+|h_1-h_2|))}{n} + o_P(1).
\end{align*}
 We then have for the first term of the last equality above, that it equals, as $n \to \infty$, to 
$\sum_{h_1,h_2=-n+1}^M d(h_1)  d(h_2) \Big(\gamma(h_1) \gamma(-h_2) ( \kappa_{4}/\sigma^4-3) + \sum_{k\in \Z}\gamma(k+h_1-h_2)\gamma(k)+\gamma(k+h_1)\gamma(k-h_2)\Big).$
Since $\sum_{j=0}^\infty | j c_j|< \infty$ and $\sum_{k \in \Z}  | k \gamma(k)| < \infty$, the second term of the same equality is of order  $\mathcal{O}(1/n)$. Hence, we have in probability, as $n \to \infty$, 
$
\var(T_{n}^{*M}) \to \!\sum_{h_1,h_2=-\infty}^M d(h_1)  d(h_2) \Big(\gamma(h_1) \gamma(-h_2) ( \kappa_{4}/\sigma^4-3) +\! \sum_{k\in \Z}\gamma(k+h_1-h_2)\gamma(k)+\gamma(k+h_1)\gamma(k-h_2)\Big).
$
Using the strictly stationarity of the process $(\Xs t)$ and  since $\sum_{k \in \Z} | d(h)| < \infty$ and $E(\Xs t)^4 < \infty$, we can verify Lindberg's condition by means of Lebesgue's dominated convergence theorem, that is
$
\bigg|  {1}/{n}\sum_{t=1}^n   \sum_{h_1,h_2=1-t}^{M \land (n-t)}  d(h_1) d(h_2) \\
\times  E^* \left( ( {\Xs t}^2 \Xs{t + h_1} \Xs{t +h_2} - \gammah(h_1) \gammah(h_2) ) \ind_{\{|  \sum_{h_3=1-t}^{M \land (n-t)} d(h_3) (\Xs t \Xs{t+h_3} - \gammah(h_3)) | >\sqrt n \eps\}} \right) \bigg| \quad \,\\
 \leq  
 \sum_{h_1,h_2 \in \Z} |d(h_1)d(h_2)| \\
\times E^* \left( ( |(\Xs 1)^2 \Xs{1 + h_1} \Xs{1 +h_2} | + |\gammah(h_1) \gammah(h_2)| ) \ind_{\{\sum_{h_3 \in \Z} |d(h_3)| ( |\Xs 1 \Xs{1+h_3}| + |\gammah(h_3)|) > \sqrt n \eps\}} \right) \to 0,$\\$  \text{as } n \to \infty.$
Therefore, by the central limit theorem for triangular arrays of weakly dependent random variables given in \citeasnoun{CLT1}, we have that, as $n\to \infty$,
$
\sqrt{n} ( T_{n}^{*M} - E^* T_{n}^{*M}) \to \mathcal{N} (0,\sigma_M^2), n \to \infty , \text{ in probability}.
$
Now, using a version in probability of Theorem 4.2 of \citeasnoun{billingsley} the proof of the theorem is concluded, since additionally to the converges for any fixed $M$, we have
$
\lim_{M \to \infty} \sigma_M^2 = \\ \sum_{h_1=-\infty}^\infty \sum_{h_2=-\infty}^\infty  d(h_1) d(h_2) \Big(\gamma(h_1) \gamma(-h_2) ( \kappa_{4}/\sigma^4-3) + \sum_{k\in \Z}\gamma(k+h_1-h_2)\gamma(k)+\gamma(k+h_1)\gamma(k-h_2)\Big).
$
Finally,  condition (3) of Theorem 4.2 of \citeasnoun{billingsley}  holds in probability since by Tchebysheff's inequality we have \\
$
\lim_{M \to \infty} \limsup_{n\to \infty} P\left( \left|  \sum_{t=1}^n \sum_{h=M+1}^{n-t} d(h) \left( \Xs t \Xs{t+h} - \gammah(h)\right)  \right| > \sqrt n\eps \right) \\ \leq
\lim_{M \to \infty}  \limsup_{n\to \infty} \var\left( \sum_{h=M+1}^n  d(h) \sum_{t=1}^{n-h} \left( \Xs t \Xs{t+h} \right)\right) / (n\eps^2) ,
$
together with similar calculations as in the evaluation of $\var(T_{n}^{*M})$ and $\sum{k \in \Z}| \gamma(k)|< \infty$ we get 
$
\lim_{M \to \infty} \sum_{h_1,h_2=M+1}^\infty d(h_1) d(h_2) O_P(1)= 0,
$
by the fact that $\sum_{h \in \Z} | d(h)| < \infty$. Using this summability property and the same arguments as in the calculation of $\var(T_n^{*M})$ leads to, as $n\to \infty$ and in probability, $\var (T_n^*)\to\sum_{h_1=-\infty}^\infty \sum_{h_2=-\infty}^\infty  d(h_1) d(h_2)$\linebreak$ \Big(\gamma(h_1) \gamma(-h_2) ( \kappa_{4}/\sigma^4-3) + \sum_{k\in \Z}\gamma(k+h_1-h_2)\gamma(k)+\gamma(k+h_1)\gamma(k-h_2)\Big)$ which can be written in frequency domain as \\ $(\kappa_{4}/\sigma^4-3) (\int_{0}^{2\pi} f(\lambda) \sum_{h \in \Z} d(h) \exp(ih\lambda)\dlambdaB)^2 + 4 \pi \int_{0}^{2\pi} |f(\lambda)  \sum_{h \in \Z} d(h) \exp(ih\lambda)|^2 \dlambdaB$. \linebreak Since the statistic in Definition \ref{statistic} has for linear processes the same asymptotic distribution as the one derived above, the assertion of the theorem follows by the triangular inequality.
\end{proof}

\begin{proof}[Proof of Theorem \ref{validity2}]
Without loss of generality, let $X_t^*$ be centered. Since $\sum_{h \in \Z} | \gamma(h)| < \infty$, we have $\sup_{\lambda \in [-\pi,\pi]} f(\lambda) < \infty$. 
To proof Theorem \ref{validity2} we use a central limit theorem for $M$-dependent sequences, see \citeasnoun{romano2000more} and a version in probability of Theorem 4.2 of \citeasnoun{billingsley}. \\
Fix $M \in \N$ and consider the $M$-dependent process $X_{t,n,M}^*=\sum_{k=0}^M \ckn \eps_{t-k}^*, t \in \Z$ with spectral density $\hat f_M(\lambda)=|\sum_{k=0}^M \ckn \exp(i\lambda k)| ^2 \sigmah^2/(2\pi)$ and autocovariance $\hat \gamma_{n,M} (h)=\int_{-\pi}^\pi \hat f_M(\lambda) \exp(i h \lambda) \dlambdaB$. Applying a central limit theorem for $M$-dependent sequences we show that $\sum_{t=1}^n n^{-1/2} X_{t,n,M}^* \overset{D}{\to} \mathcal{N} (0, 2\pi f_M(0)), \text {in probability,}$
where \\ $f_M(0)=|\sum_{k=0}^M c_k|^2 \sigma^2/(2\pi)$. Theorem \ref{consistence} gives for the variance, let $n \geq M$, \\
$
n^{-1}\var\left(\sum_{t=1}^n  X_{t,n,M}^*\right)  = \sum_{h=-M}^{M} (1-|h|/n) \hat \gamma_M(h) =(2\pi) \hat f_M(0) + O_P(1/n) \to  2\pi f_M(0),
$
 in probability, as $n \to \infty$. If $f_M(0)=0$ the assertion follows. Assume that $f_M(0)>0$. Since $\{X_{t,n,M}^*\}$ is stationary, we have for $k\geq M$ and all $a \in \N$ $\var(\sum_{t=a}^{a+k-1} n^{-1/2} X_{t,n,M}^*)=1/n \var(\sum_{t=1}^{k}   X_{t,n,M}^*)= 2\pi \hat f_M(0) k/n + O_P(1/n)$. Furthermore, the process  $\{X_{t,n,M}^*\}$ has i.i.d. innovations with a finite fourth moment. Thus, the fourth moment can easily be bounded; we have
$
E \left( n^{-1/2} X_{1,n,M}^* \right)^4 = \left(E( (\eps_1^*)^4 -3 \sigmah^4) \sum_{k=0}^M \ckn^4 + 3 \hat \gamma_M(0)^2\right)/n^2 =\mathcal O_P(1/n^2).
$
Consequently, the conditions of Theorem 2.1 of \citeasnoun{romano2000more} can be easily verified and it follows that $\sum_{t=1}^n n^{-1/2} X_{t,n,M}^* \overset{D}{\to} \mathcal{N} (0, 2\pi f_M(0)),$ in probability. Furthermore, the absolute summability of ${\gamma(h)}$ implies, as $M\to \infty$, $f_M(0)\to f(0)$. Since $\{\ckn\}$ is absolutely summable, it can be shown that the $M$-approximation used is sufficiently close. The absolute summability of  $\{\ckn\}$ implies absolute summability of $\{\gammah(h)=E^* X_{t+h,n}^* X_{t,n}^* \}$. Let $\delta>0$,
$
\lim_{M\to \infty}  \limsup_{n\to \infty}  \, P( |\sum_{t=1}^n X_{t,n,M}^*  -  \sum_{t=1}^n X_{t,n}^* | > \sqrt n\delta) \delta^2
\leq 
 \lim_{M\to \infty} \limsup_{n\to \infty} \sum_{h=M+1}^\infty \gammah(h) - \sum_{h=-n+1}^{n-1} |h|/n \linebreak   \sum_{k=M+1}^\infty  \ckn \hat c_{k+h,n} \sigmah^2-  \sum_{|h|\geq n} \sum_{k=M+1}^\infty \ckn \hat c_{k+h,n} \sigmah^2 
=\lim_{M\to \infty} \sum_{h=M+1}^\infty \gamma(h) = 0,
$
where the last equation follows by the absolute summability of $\gamma(h)$. Thus,  the assertion follows with a version in probability of Theorem 4.2 of \citeasnoun{billingsley}. \\
Since $\fn$ is a uniformly consistent estimator and $\{\gammah\}$ is absolutely summable, we have 
$
\var \left(\frac{1}{\sqrt n}\sum_{t=1}^n  X_{t,n}^*\right) = \sum_{h=-n+1}^{n-1} \left( 1- {|h|}/{n}\right) \gammah(h) = \fn(\lambda) - \sum_{|h|\geq n} \gammah(h) - \\\sum_{h=-n+1}^{n-1} {|h|}/{n} \gammah(h) = f(\lambda) + o_P(1)- \sum_{|h|\geq n} \gammah(h) - \sum_{h=-n+1}^{n-1} {|h|}/{n} \gammah(h) {\to}  f(\lambda),
$
as $n\to \infty$, in probability. The assertion follows by the triangular inequality.
\end{proof}
\begin{proof}[Proof of Corollary \ref{corollary0}]
By Applying the SDDB to the time series   \linebreak$Y_1,\dots, Y_n$, validity of the SDDB for this statistic can be derived using the delta-method and similar arguments as those used the proof of Theorem \ref{validity2}.
\end{proof}
\begin{proof}[Corollary \ref{corollary1}]
Since $\tilde f_n(0)$ is a consistent estimator, we have with Slutsky's Theorem $\sqrt n ( \Xn-\mu)/(2 \pi \tilde f_n(0))^{1/2} \to \mathcal{N} (0,1)$. Furthermore, we have for $0 < \eps_n < f(0),\, \eps_n \to 0,$ as $n \to \infty,$ that $E (n (\Xn-\mu)^2)/( 2\pi \tilde f_n(0))\to 1$, since
\begin{align*}
 E\left[\frac{n (\Xn-\mu)^2}{ 2\pi (f(0)+\tilde f_n(0)-f(0)} \ind_{\{| \tilde f_n(0)-f(0)| < \eps_n\}} + \frac{n (\Xn-\mu)^2}{( 2\pi \tilde f_n(0)}\ind_{\{|\tilde f_n(0) - f(0)| \geq \eps_n\}}\right]\\
\leq  E \frac{n (\Xn-\mu)^2}{ 2\pi f(0) + \eps_n} + E \frac{n (\Xn-\mu)^2}{ 2\pi \delta} E\ind_{\{|\tilde f_n(0) - f(0)| \geq \eps_n\}} \to 1, 
\end{align*}
and $
E {n (\Xn-\mu)^2}/\{ 2\pi \tilde f_n(0)\} \geq E {n (\Xn-\mu)^2}/\{ 2\pi \tilde f(0)+\eps_n\}  \ind_{\{| \tilde f_n(0)-f(0)| < \eps_n\}} \to 1,
$
 as $n \to \infty.$ \\
For a valid bootstrap approximation it is necessary that $\tilde f_n^*$ is a consistent estimator. The differentiability of $\fn$ ensures that the corresponding autocovariance function fulfills $|\gammah(h) |\leq |h| ^{-2 + \eps} C$ for some $\eps>0$ and for all $n \in \N$. Since $\{X_t^*, t\in \Z\}$ possesses a one-sided MA representation with i.i.d. innovations, we have with the absolute summability of $\{\ckn\}$ and the boundedness of fourth moment of the innovations that $\sup_t \sum_{h_1,h_2,h_3 \in \Z} \cum^*(X_t^*,X_{t+h_1}^*,X_{t+h_2}^*,X_{t+h_3}^*) < C$. Thus, similarly to Lemma A.2 in \cite{Jentsch2015} it can be shown that $\sup_\lambda | \fn^*(\lambda) -f (\lambda) | {\to} 0,$ in probability, where $\fn^*$ is a lag window SD estimator based on the pseudo observations $X_1^*,\dots,X_n^*$ and fulfilling Assumption 2.1 of \cite{Jentsch2015}. We construct our consistent estimator as $\tilde f_n^*(0) = \fn(0)  + \delta \ind_{\{\fn(0) < \delta\}}$. Using Theorem \ref{validity2} and the same arguments as above we get $\sqrt n (\Xn^* -\Xn)/(2 \pi \tilde f_n^*(0))^{1/2} \to \mathcal {N} (0,1)$, as $n\to \infty$, in probability. Furthermore, we have $E^* n (\Xn^* -\Xn)^2/(2 \pi \tilde f_n^*(0))=1$ and the assertions follows by the triangular inequality.
\end{proof}
\begin{proof}[Corollary \ref{corollary2}]
The assumption  $\sum_{k \in \N} |k \hat c_{k,n}|\leq C$ of Theorem \ref{validity1} ensures that $\sum_{k \in \N} |k \hat c_{k,n}|^2\leq \tilde C$. Furthermore, we have $E^* (\eps_t^*)^8< \infty$ independently from $n$. Thus the nonparametric estimator $\tilde \kappa_4^*$ of \citet{Fragkeskou2015} for the fourth moment of the innovation $\{\eps^*_t\}$ can be applied and is consistent under this conditions. The assumption $\sum_{k \in \N} |k \hat c_{k,n}|\leq C$ ensures that the corresponding autocovariance function $|\gammah(h) |\leq |h| ^{-2 + \eps} C$ for some $\eps>0$ and for all $n \in \N$. Thus, the consistency of a lag-window SD estimator given in \cite{Jentsch2015} follows by the same arguments as in the proof of Corollary \ref{corollary1}. Since $\tau^2$ is a continuous transformation of $\kappa_4$ and $f$, we construct the following consistent estimators of $\tau^2>\delta$ where $0 < \eps_n< \delta$ and $\eps_n \to 0,$ as $n \to \infty$, $\tilde \tau_2^*=\max(\eps_n,(\tilde \kappa_{4}^*/\sigma^4-3) (\int_{0}^{2\pi} \tilde f_n^*(\lambda) \sum_{h \in \Z} d(h) \exp(ih\lambda)\dlambdaB)^2 + 4 \pi \int_{0}^{2\pi} |\tilde f_n^*(\lambda)  \sum_{h \in \Z} d(h) \exp(ih\lambda)|^2 \dlambdaB)$ which is based on $X_1*,\dots,X_n^*$ and 
$\tilde \tau_2=\max(\eps_n,(\tilde \kappa_{4}/\sigma^4-3) (\int_{0}^{2\pi} \tilde f_n(\lambda) \sum_{h \in \Z} d(h) \exp(ih\lambda)\dlambdaB)^2 + 4 \pi \int_{0}^{2\pi} |\tilde f_n(\lambda)  \sum_{h \in \Z} d(h) \exp(ih\lambda)|^2 \dlambdaB)$ which is based on $X_1,\dots,X_n$. 
The assertions follows with Theorem \ref{validity1} and the same arguments as in the proof of Corollary \ref{corollary1}.
\end{proof}

\section{Acknowledgments}
We are very grateful for the extremely helpful suggestions of two anonymous reviewers and an Associate Editor. Their careful reading of a previous version of the paper and their thorough reports led to a considerable improvement of the present paper.

\bibliographystyle{rss}

\bibliography{bib}

\end{document}